\documentclass[12pt]{article}
\pdfoutput=1 
\usepackage[english]{babel}

\usepackage[letterpaper,top=2cm,bottom=2cm,left=3cm,right=3cm,marginparwidth=1.75cm]{geometry}

\usepackage{amsmath}
\usepackage{graphicx}
\usepackage[colorlinks=true, allcolors=blue]{hyperref}
\usepackage[utf8]{inputenc}
\usepackage{url}
\usepackage{hyperref}
\usepackage{amssymb}
\usepackage{accents}
\usepackage[english]{babel}
\usepackage{amsthm}
\usepackage{geometry,mathtools}
\usepackage{esvect}
\usepackage{dsfont}
\setcounter{MaxMatrixCols}{20}
\usepackage{caption}
\usepackage{subcaption}
\usepackage{tikz}
\usetikzlibrary{automata,arrows,positioning,calc}
\usepackage{blkarray}
\usepackage{authblk}
\usepackage{mathtools}
\mathtoolsset{showonlyrefs}

\usepackage{enumerate}
\usepackage{enumitem}   
\usetikzlibrary{shapes,arrows,graphs,graphs.standard,shapes.misc}
\usetikzlibrary{matrix,decorations.pathreplacing, calc, positioning,fit}
\usetikzlibrary{shapes.geometric}
\usepackage{xcolor}
\usetikzlibrary{matrix,decorations.pathreplacing}
\pgfkeys{tikz/mymatrixenv/.style={decoration=brace,every left delimiter/.style={xshift=4pt},every right delimiter/.style={xshift=-4pt}}}
\pgfkeys{tikz/mymatrix/.style={matrix of math nodes,left delimiter=[,right delimiter={]},inner sep=1pt,row sep=0em,column sep=0em,nodes={inner sep=6pt}}}

\newcommand{\Rset}{\mathbb{R}}
\newcommand{\Cval}{\mathbb{C}}
\newcommand{\Nset}{\mathbb{N}}

\newcommand*{\rom}[1]{\expandafter\@slowromancap\romannumeral #1@}

\newlist{steps}{enumerate}{1}
\setlist[steps, 1]{label = \textbf{Step \arabic*}:}
\newtheorem{thm}{Theorem}
\newtheorem{prop}{Proposition}
\newtheorem{defn}{Definition}[section]
\newtheorem{lem}{Lemma}

\newtheorem{cor}{Corollary}[thm] 

\newtheorem{rem}{Remark}
\theoremstyle{definition}
\newtheorem{exmp}{Example}[section]

\title{Vector-Valued Gossip over $w$-Holonomic Networks}

\author{Erkan Bayram\textsuperscript{*}, Mohamed-Ali Belabbas\textsuperscript{*}, Tamer Ba{\c{s}}ar\thanks{E. Bayram, M.-A. Belabbas and T. Ba{\c{s}}ar are with Coordinated Science Laboratory, University of Illinois Urbana-Champaign, Email: \texttt{\{ebayram2,belabbas,basar1\}@illinois.edu}}}
\date{}

\begin{document}
\maketitle
\begin{abstract}
We study the weighted average consensus problem for a gossip network of agents with vector-valued states. For a given matrix-weighted graph, the gossip process is described by a sequence of pairs of adjacent agents communicating and updating their states based on the edge matrix weight. 
Our key contribution is providing conditions for the convergence of this non-homogeneous Markov process as well as the characterization of its limit set.
To this end, we introduce the notion of ``$w$-holonomy'' of a set of stochastic matrices, which enables the characterization of sequences of gossiping pairs resulting in reaching a desired consensus in a decentralized manner. Stated otherwise, our result characterizes the limiting behavior of infinite products of (non-commuting, possibly with absorbing states) stochastic matrices.

\textbf{Keywords:} Consensus; Gossiping; Non-Homogeneous Markov Processes; Holonomy; Convergence of Matrix Products; Permutation Groups

\end{abstract}

\section{Introduction}\label{sec:intro}

Consensus entails reaching an agreement between a set of agents~\cite{degroot1974reaching}. Many applications of distributed control systems require agents to reach a consensus for a given quantity; for example consensus to the average value of their respective initial states. An extension of average value consensus is the weighted average consensus, in which each agent contributes to the agreed-upon consensus value based on its assigned weight;  see the literature review below for more details.

In this paper, we study the weighted average consensus problem for a gossiping network of agents with vector-valued states. Specifically, given a matrix weighted communication graph, we study the process whereby at each time step, two adjacent agents in the network communicate and update their states based on the matrix weight of the edge adjoining them. These two agents are called a gossiping pair and the overall process is called a \textbf{weighted gossip process}~\cite{boyd2006randomized,benezit2010weighted}. It is akin to a non-homogeneous Markov process, and the study of its convergence thus reduces to the study of convergence of an infinite product of row stochastic matrices taken from a finite set. It is well known that this is a hard problem for which no general solution is known. This is due in part to the fact that, save for particular cases such as a set of commuting matrices, the order in which stochastic matrices appear in the infinite product clearly affects the limit set; in fact, this set can be a continuum (see, e.g.~\cite{belabbas2021triangulated} for examples) or it can be finite (e.g.~\cite{chen2022gossip}). 

We adopt here a vantage point on the consensus problem similar to the one of~\cite{chen2022gossip}, where the notion of holonomy of a set of stochastic matrices was introduced. There, the authors used the term holonomy to indicate a change of a certain left eigenvector (referred to as {\em weight vector} below) corresponding to eigenvalue $1$ of the product of stochastic matrices along any cycle in the graph. For each cycle in the graph, one can associate a holonomy group (see~\cite{wolf2002dif} for the precise definition of a holonomy group). This group characterizes how the eigenvector changes as gossiping occurs along the cycle. In~\cite{chen2022gossip}, the authors consider gossip processes for agents with scalar states and impose that the entries of a gossip matrix be strictly positive. Together, these restrictions imply that the holonomy group for a cycle, if it exists, can only be the trivial group. 

Our work here extends this earlier work in two fundamental ways. First, we allow vector-valued states for the agents. Second, and more importantly, we allow zero entries in the gossip matrices. Said otherwise, we allow for update matrices that have absorbing states (i.e., have a standard unit vector as a row). These extensions together make possible the existence of a non-trivial, finite holonomy group in a gossip process, {a phenomenon that does not arise in classical models that only consider update matrices with strictly positive entries. The notion of holonomy in this context refers to the structure imposed by a set of stochastic matrices on a given weight vector. In particular, when these matrices exhibit finite orbit sets, meaning that iterating the update process results in a cyclic progression through a finite set of weight vectors, this provides a powerful algebraic tool for analyzing the long-term behavior of the system.

More intuitively, an orbit in this setting represents the sequence of weight vectors generated by successive applications of the state transition matrix around cycles. When the orbit is finite, the process exhibits a form of recurrence, which significantly influences the system’s convergence properties. By studying the holonomy structure of the update dynamics, we can determine whether the gossip process stabilizes to a predictable state, oscillates within a bounded region, or exhibits other structured asymptotic behaviors.
}

More generally, the hereby adopted set-up raises the following questions: 
\begin{enumerate}
    \item How to understand the appearance of non-trivial holonomy groups? I.e., situations where the weight vector changes after completing one loop in the gossip graph,  but then returns to its initial value after completing this loop a finite number of times? 
    \item Can we still guarantee the convergence of the weighted gossip process to a limit or a finite limit set by following a sequence of gossiping pairs in a decentralized manner? 
    \item How does the potential presence of absorbing states in gossip updates impact the consensus weight vector?  These three questions are fully addressed in this paper.
\end{enumerate}

To understand the phenomenon described in the first question, we introduce a concept which we call {\em $w$-holonomy} of a set of stochastic matrices. This concept helps us describe the set of stochastic matrices that possess finite orbit sets when acting on some vectors. Such matrices are the ones enabling the appearance of holonomy groups in gossip processes.

For the second question, we introduce the so-called derived graph of the (communication) graph $G$ for the weight vector $w$, which we denote by $D_G(w)$. Infinite exhaustive walks in the derived graph {(which visits every node in the graph infinitely often)} will correspond to {\em allowable sequences} of updates in the gossip process. These updates can be implemented in a decentralized manner, and yield a process which converges to a finite limit set. 

For the third question, the presence of zeros and ones in the update matrices can significantly impact the consensus weights in our analysis. In particular, they can lead to some agents not contributing to the consensus value average and to the appearance of permutation matrices as update matrices. In fact, even when none of the update matrices are permutation matrices, their product within the gossip iteration can result in a permutation matrix (as will be illustrated later). This fact greatly complicates the analysis and is at the root of the existence of finite limit sets.

{ We summarize our main contributions as follows:
\begin{itemize}
    \item We provide conditions for the convergence of a non-homogeneous Markov process in the presence of absorbing states and non-trivial holonomy.
    \item We prove that the corresponding infinite product of stochastic matrices, whose order is given by the allowable sequences, converges to a finite limit set.     
    \item We explicitly characterize the elements of this limit set. Our results show that the elements in the limit set can be relabeled as a block diagonal matrix, with each block either having identical rows or being a permutation block.
\end{itemize}

These findings are closely related to the concept of {\em multiple consensus} in the literature. Specifically, when there exists a fixed partition of the components of the vector states such that consensus occurs for each subvector corresponding to this partition, the process is said to exhibit {\em multiple consensus} (or class-ergodicity)~\cite{bolouki2015consensus,bolouki2013ergodicity,touri2012approximations} and the elements of this partition are referred to as a {\em consensus clusters}. In our results, each element in the limit set exhibits {\em multiple consensus} and the corresponding consensus clusters can be determined by the method we have described.}

The paper is organized as follows: We provide a brief review of the relevant literature on distributed control and weighted average consensus in the following paragraph. We then describe the notations and conventions used in the paper at the end of this section. In Section~\ref{sec:prelim}, we provide a precise formulation of the problem solved in this paper. The notion of holonomy and the main results of the paper are presented in Section~\ref{sec:main_resutls}. The proof of the main theorem is provided in Section~\ref{sec:proof} along with some auxiliary results. A summary of the results of the paper and outlook for future research are provided in Section~\ref{sec:conclusion}.

\textbf{Literature Review} In recent decades, there has been an increase in the applications of multi-agent systems and distributed control. These applications aim to achieve consensus among agents, as seen in works like \cite{zhu_etal2021federated,olfati2007distributed,chen_etal2017cluster,sundaram2018distributed,touri2015continuous,bayram2024age,bayram2024ageCoded,bolouki2015eminence}. Many of these systems involve agents with multiple states, highlighting the importance of addressing weighted average consensus.

The field of weighted average consensus has seen diverse perspectives and contributions over the years, such as \cite{zhu2010discrete,blondel2014decide,nedic2016convergence,tsitsiklis1984problems,kashyap2007quantized,etesami2015convergence} and \cite{chen2016distributed}. Research has tackled challenges like time delays and asynchronous information spread~\cite{li2021event,Morse_etal2008ReachingConsensus,fang2005asynchronous}, as well as changing network topologies due to link failures or reconfiguration~\cite{Morse_etal2008ReachingConsensus,Morse_etAl2008Dynamically,ren2005consensus}. Moreover, works presented by~\cite{hendrickx2011new,hendrickx2012convergence} have focused on continuous-time consensus problems. Various techniques have been used to solve consensus problems, including Lyapunov function-based methods~\cite{nedic2016convergence,fagnani2008randomized}, and approaches inspired by ergodicity theory~\cite{touri2012backward,touri2010ergodicity,touri2012approximations,aghajan2021ergodicity}. Furthermore, research efforts have addressed the constant network topology driven by the gossip process~\cite{belabbas2021triangulated,chen2022gossip,liu2011deterministic,he2011periodic,bayram2025construct}. Our work falls within the scope of this latter category of research.

\textbf{Notations and conventions.} We denote by $G=(V,E)$ an undirected graph, with $V=\{v_1,\ldots,v_{|V|}\}$ the node set and $E=\{e_1,\ldots,e_{|E|}\}$ the edge set. The edge linking nodes $v_i$ and $v_j$ is denoted by $(v_i, v_j)$, a self-arc or self loop is denoted by $(v_i , v_i)$. We call $G$ {\em simple}  if it has no self-loops. 

Given a sequence of edges $\gamma = e_1 \cdots  e_k$ in $E$, a node $v \in V$ is called \textbf{covered} by $\gamma$  if it is incident to an edge in $\gamma$. Given a sequence $\gamma = e_1 e_2 \cdots $, we say that $\gamma'$ is a string of $\gamma$ if it is a contiguous
subsequence, i.e., $\gamma'= e_k e_{k+1} \cdots e_l$ for some $k \geq 1$ and $l \geq k$. Let $\gamma = e_1 \cdots e_k$
be a finite sequence and $e_{k+1}$ be an edge of $G$. The sequence $e_1 \cdots e_k e_{k+1}$ obtained by adding $e_{k+1}$ to the end of $\gamma$ is denoted by $\gamma \lor e_{k+1}$.

For a given simple undirected graph $G$ as above, we denote by $\vec{G} = (V, \vec{E} )$ a directed graph on the same node set and with a ``bidirectionalized'' edge set; precisely, $\vec{E}$ is defined as follows: we assign to every edge $(v_i, v_j)$ of $G$, $i \neq j$, two directed edges $v_iv_j$ and $v_jv_i$. 

We denote a walk in $\vec{G}$ either by the succession of edges or the succession of nodes visited. We say that $\gamma = v_{i_1} \cdots v_{i_k}$ is a walk in the directed graph $\vec{G}$ if $v_{i_l} v_{i_{l+1}}$, for $\ell =1, \cdots , k-1$, is an edge of $\vec{G}$. 
We refer to $v_{i_1}$ and $v_{i_k}$ as the starting- and ending-nodes of $\gamma$, respectively. We define $\gamma^{-1} :=v_{i_k} v_{i_{k-1}}
\cdots v_{i_{1}}$. Let $\gamma'=v_{i_{l}} v_{i_{l+1}} \cdots v_{i_{m}}$ be another walk in $\vec{G}$. We denote by $\gamma \lor \gamma' := v_{i_1} \cdots v_{i_{k}}v_{i_{l}} \cdots v_{i_m}$ the concatenation of the two walks.

{A vector $p \in \mathbb{R}^n$ is a probability vector if $p_i \geq 0$ and $\sum_{i=1}^{n} p_i = 1$. The set of such vectors is the $(n-1)$-simplex $\Delta^{n-1}$. Its interior $\operatorname{int} \Delta^{n-1}$ in the Euclidean topology is one where all entries of $p$ are strictly positive.}

We let $\mathds{1}$ be a vector of all ones, whose dimension will be clear from the context.

\section{Preliminaries}\label{sec:prelim}
{\textbf{Gossip Process on a Matrix Weighted Graph.}} Let $G=(V,E)$ be an undirected simple graph on $n$ nodes. Each node represents an agent, and each agents' state is a vector in $\mathbb{R}^m$. We denote the state vector of the agent $i$ at time $t$ by ${x^i(t)}= \left[ x_1^i(t) , x_2^i(t) ,\ldots , x_m^i(t) \right]^\top.$ The state of the system is the concatenation of the agents' states
$$
x(t)= [x^1(t)^\top x^2(t)^\top \cdots x^n(t)^\top]^\top \in \Rset^{nm}.
$$
The stochastic process we analyze here is described by sequences of edges $\gamma=e_{i_1}\cdots e_{i_t} \cdots$ in $G$ with the convention that {if $e_{i_t}=(v_i,v_j)$, then agents $i$ and $j$ update their states according to a row stochastic matrix ${A_{ij}}$, called \textbf{local stochastic matrix}. 
Then, the gossip process on edge $e_{i_t}=(v_i,v_j)$ at time $t$ is given by } 
\begin{equation}\label{eqn:gossip_process}
x(t+1) = {A}_{ij} x(t).  
\end{equation}
The matrix $A_{ij}$ is an $nm$-dimensional stochastic matrix such that the rows and columns corresponding to the states of agent $i$ and agent $j$ is the submatrix $\tilde{A}_{ij}$ and the rows and columns corresponding to the other agents is the identity matrix. {We refer to the matrix $\tilde{A}_{ij}$ as~\textbf{pre-local stochastic matrix} for agents $i$ and $j$.} For example, the local stochastic matrix $A_{12}$, which is associated with the edge $(v_1,v_2)$, is given by
\begin{equation}\label{block}
A_{12} = 
\begin{bmatrix}
\tilde{A}_{12} & 0_{2m \times (n-2)m}\\
0_{(n-2)m \times 2m} & I_{(n-2)m \times (n-2)m}
\end{bmatrix}
\end{equation}
 We assume here that $A_{ij} = A_{ji}$. {If each edge $e$ in $G$ is labeled with some matrix $A_e$, then the graph $G$ is called a {\em matrix weighted graph}.
} Hence, when dealing with sequences of edges in $\vec G$, we associate $A_{ij}$ with both $v_iv_j$ and $v_jv_i$. This makes the graph $\Vec{G}$ a directed matrix weighted graph.

For a finite sequence $\gamma = e_1 \cdots e_k$ of edges in $G$ and for a given pair of integers
$0 \leq s \leq t \leq k$, we define the transition matrix $P_\gamma(t : s)$ for $t \ge s + 1$ as {the left product of local stochastic matrices from $s+1$ to $t$, given by}:
\begin{equation}\label{eqn:state_transition_over_walk}
P_{\gamma}(t : s) := A_{e_t} A_{e_{t-1}}\cdots A_{e_{s+1}}
\end{equation}
We set $P_\gamma(t : s) = I$ for $t \leq s$. This allows us to write the following update for the state vector $x$ at $s$:
\begin{equation}\label{eqn:state_transition}
x(t) = P_\gamma(t:s) x(s). 
\end{equation}
When clear from the context, we will simply write $P_\gamma$ for $P_{\gamma}(t : s)$.

{\textbf{Equivalence Classes of Cycles.}} A pointed cycle in $\vec G$ is a walk $v_{i_1}v_{i_2}\cdots v_{i_k}v_{i_1}$, where $v_{i_1}$ is called the {\em basepoint} of the cycle. Let $\vec{\mathcal{C}}_{\bullet}$ be the set of all pointed cycles in $\vec{G}$.

To each pointed cycle {$C \in \vec{\mathcal{C}}_{\bullet}$}, we assign a transition matrix $P_C$ as in~\eqref{eqn:state_transition_over_walk}; when we want to emphasize the basepoint, we write $P_{C,i}$ if the basepoint is $v_i$. 

Given a cycle $C$ in $\vec{G}$, we can reduce the dimension of vectors in $\Delta^{nm-1}$ and stochastic matrices $A \in \Rset^{nm \times nm}$ by removing rows and/or columns corresponding to nodes that are {\em not} covered by $C$. For example, if $C=v_1v_2v_3v_1$, then we let $\bar{A} \in \Rset^{3m \times 3m}$ be the principal submatrix of $A$ obtained by keeping the first $3m$ rows and columns; similarly, $\bar{w} \in \Rset^{3m}$ is the subvector of $w$ obtained by keeping the first $3m$ entries. The cycle $C$, and hence the dimension of the operation $\bar{\cdot}$, will always be clear from the context.

{\textbf{The Existence of Absorbing States.}} In this work, we allow a pre-local stochastic matrix to have zeros in any row. Consequently, it is possible to construct a valid local stochastic matrix by having a single non-zero element in any given row while setting all other elements in that row to zero (e.g. having a standard unit vector as a row). This leads to the possibility that a valid stochastic matrix can either be a complete permutation matrix or include a permutation block.

We can elaborate on the concept of a permutation block using the following definition{, where we denote the set of permutation matrices of size $n\times n$ by $S_n$}.  Given an index set $\pi \subseteq \{1,\ldots, nm\}$, a stochastic matrix $A$ has {\em a permutation block for $\pi$} if the submatrix of $A$ with rows/columns indexed by $\pi$ is a permutation matrix in $S_{|\pi|}$. Let ${\pi_A}$ be the largest index set among the sets indexing permutation submatrices in $A$; we refer to it as the {\em maximal permutation index of $A$}. 

To illustrate, consider the following instances of pre-local stochastic matrices, which are associated with the edges $e_1,e_2$ and $e_3$, respectively, in the graph $\Vec{G}$:
\begin{align}\label{eqn:example_matrices}
\tilde{A}_{e_1} &:= 
\left[\begin{smallmatrix}
0 & 1 & 0 & 0   \\
a_{21} &a_{22} &a_{23} &a_{24}  \\
1 & 0 & 0 & 0  \\
a_{41} &a_{42} &a_{43} &a_{44}
\end{smallmatrix}\right], 
\tilde{A}_{e_2} := 
\left[\begin{smallmatrix}
 0 & 0 & 1 & 0  \\
 1 & 0 & 0 & 0  \\
 b_{53} &b_{54} &b_{55} &b_{56} \\
 b_{63} &b_{64} &b_{65} &b_{66}
\end{smallmatrix}\right], \\
\tilde{A}_{e_3} &:= 
\left[\begin{smallmatrix} 
c_{11} & c_{12} & c_{15} & c_{16}  \\
1 & 0 &  0 & 0  \\
c_{51} & c_{52} & c_{55} & c_{56}  \\
c_{61} & c_{62}  & c_{65} & c_{66}  
\end{smallmatrix}\right],
\end{align}

with $a_{ij},b_{ij}$ and $c_{ij}$ real numbers in the open interval $(0, 1)$. Note that none of the pre-local matrices provided in~\eqref{eqn:example_matrices} has a permutation block for any index set; in contrast, they have rows that have a single nonzero entry. we will address these matrices in~\eqref{eqn:example_matrices} in later this section.

Within the state transition matrix definition~\eqref{eqn:state_transition_over_walk}, we can elaborate on the concept of permutation block. Consider a pointed cycle $C_1$ to be a walk such that $C_1:=e_3e_2e_1$. Assume that the associated pre-local stochastic matrices with the edges in $C_1$ are provided in~\eqref{eqn:example_matrices}. According to~\eqref{eqn:state_transition_over_walk}, we then have the following:
\begin{equation}
\bar{P}_{C,1} = \bar{A}_{e_1}\bar{A}_{e_2}\bar{A}_{e_3} = \left[\begin{smallmatrix}
1 & 0 & 0 & 0 & 0 & 0  \\
d_{21} & d_{22} & d_{23} &  0 & d_{25} & d_{26}  \\
d_{31} & d_{32} & d_{33} &  0 & d_{35} & d_{36}  \\
d_{41} & d_{42} & d_{43} &  0 & d_{45} & d_{46}  \\
d_{51} & d_{52} & d_{53} &  d_{54} & d_{55} & d_{56}  \\
d_{61} & d_{62} & d_{63} &  d_{64} & d_{65} & d_{66}  
\end{smallmatrix}\right]
\end{equation}
with $d_{ij}$ real numbers in the open interval $(0, 1)$. The matrix $\Bar{P}_{C,1}$ has a permutation block for the index set $\pi=\{1\}$. 

It is worth noting that, in contrast, none of the pre-local stochastic matrices provided in~\eqref{eqn:example_matrices}, which are associated with the edges in $C_1$, have a permutation block for any index set. This observation underlines that even though none of the pre-local stochastic matrices for the edges in $C_1$ conform to the criteria of a permutation block, the corresponding state transition matrix for the pointed cycle $C_1$ distinctly features such a permutation block. 


\section{Main Result}\label{sec:main_resutls}
 
In this section, we introduce the main concepts and present the main result of this paper. As already mentioned, an important concept is the one of {\em holonomy}. In differential geometry, holonomy deals with the variation of some quantity (e.g. a vector) along the loops in a given space. If there is no variation in this quantity after completing a loop, the process is defined as {\em holonomic}. Otherwise, it is said to be {\em non-holonomic}. In our convention, if there exists a finite $k>1$ such that the quantity does change after completing the loop once but comes back to the initial value after completing the loop $k$-times, the process which evolves the said quantity is called {\em finitely non-holonomic}. For our purpose, the quantity is a left weight vector, the process evolving the quantity is the gossip process, and the space is the graph $\vec G$. 

\textbf{Holonomy in the network.} {We first define an equivalence relation on the set of all pointed cycles, i.e. ${\vec{\mathcal{C}}_{\bullet}}$ by saying that $C_i, C_j \in \vec{\mathcal{C}}_{\bullet}$ are equivalent if they visit the same vertices in the same cyclic order. The set of equivalence classes of pointed cycles are referred to as {\em cycles}, and  we denote by $\vec{\mathcal{C}}$ the set of all such equivalence classes.} {For example, the pointed cycles $C_i = v_i v_1 \cdots v_2 v_j v_i$ and $C_j = v_j v_i v_1 \cdots v_2 v_j$ belong to the same equivalence class, since they differ only in the choice of starting vertex, and therefore represent the same cycle.}

{
Note that for a given cycle $C \in \vec{\mathcal{C}}$, the state transition matrix  $P_{C,i}$ depends on the choice of basepoint of the pointed cycle $C_i \in C$. In other words, different basepoints yield cyclic permutations of the same matrix product, which are in general not equal. Hence, by definition, the existence of a left fixed point, i.e.
$w = w(P_{C,i})^k$, is a property of the pointed cycle $C_i$, not of the cycle $C$ itself. However, the following lemma shows that this property is {\em in fact} shared by all pointed cycles in the same equivalence class. Therefore, this property is well defined for the cycles.} 


\begin{lem} \label{lem:path_change}
Let $C_i$ and $C_j$ be two pointed cycles in a cycle $C$. If there exists a weight vector $w$ such that $w=w (P_{C,i})^k$ holds for some positive $k$, then there exists a weight vector $w'$ such that $w'=w' (P_{C,j})^k$ holds. 
\end{lem}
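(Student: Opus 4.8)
The plan is to exploit the fact that, since $C_i$ and $C_j$ lie in the same cycle $C$, they traverse the same edges in the same cyclic order and differ only in the choice of basepoint; hence $P_{C,i}$ and $P_{C,j}$ are ``cyclic rearrangements'' of one and the same ordered product of local stochastic matrices. Concretely, I would write the closed walk representing $C_i$ as $e_1 e_2 \cdots e_t$ (basepoint $v_i$), let $v_j$ be the vertex reached after the first $s$ edges, and note that the closed walk representing $C_j$ is then $e_{s+1} \cdots e_t e_1 \cdots e_s$. Setting $Q := A_{e_s} A_{e_{s-1}} \cdots A_{e_1}$ and $R := A_{e_t} A_{e_{t-1}} \cdots A_{e_{s+1}}$, the definition \eqref{eqn:state_transition_over_walk} gives $P_{C,i} = RQ$ and $P_{C,j} = QR$. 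If $v_j = v_i$ the statement is trivial with $w' = w$, so we may assume $1 \leq s \leq t-1$.

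The key algebraic fact is the standard identity for cyclic rearrangements, $R\,(QR)^k = (RQ)^k\,R$ for every $k \geq 1$, which follows from associativity by regrouping the factors. Given a weight vector $w$ with $w = w(P_{C,i})^k = w(RQ)^k$, I would then simply set $w' := wR$ and compute
\[
w'(P_{C,j})^k = wR(QR)^k = w(RQ)^k R = w(P_{C,i})^k R = wR = w',
\]
which is exactly the asserted relation $w' = w'(P_{C,j})^k$.

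It remains to check that $w'$ is again a legitimate weight vector, and this is where I would invoke that each $A_{e_\ell}$ is row stochastic: hence $R$, as a product of row stochastic matrices, is row stochastic, so $w' = wR$ has nonnegative entries and $w'\mathds{1} = wR\mathds{1} = w\mathds{1} = 1$, i.e. $w'$ again lies in the simplex. If the notion of weight vector carries any further positivity/support requirement, one argues in the same way, noting that right multiplication by a row stochastic matrix cannot decrease the mass, and that passing to the covered-node submatrices via the $\bar\cdot$ operation leaves the cyclic identity untouched.

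I do not expect a genuine obstacle here: the entire content is the cyclic-rearrangement identity together with the closure of the class of weight vectors under right multiplication by row stochastic matrices. The only points requiring care are bookkeeping ones: correctly locating the splitting index $s$ from the position of $v_j$ along $C_i$, getting the order of the product in \eqref{eqn:state_transition_over_walk} right so that $P_{C,i} = RQ$ rather than $QR$, and confirming that the produced $w'$ meets whatever normalization is built into the definition of a weight vector.
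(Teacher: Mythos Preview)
Your proposal is correct and follows essentially the same idea as the paper: both exploit that $P_{C,i}$ and $P_{C,j}$ are cyclic rearrangements of the same ordered product of local stochastic matrices, and obtain $w'$ by right-multiplying $w$ by the stochastic factor that effects the shift of basepoint. The only cosmetic difference is that the paper carries out the one-edge shift (multiplying by a single $A_{v_jv_i}$, which one would iterate to reach an arbitrary basepoint), whereas you group the first $s$ edges and handle the general shift in one stroke via $w' = wR$; the underlying algebra and the verification that $w'$ is again a weight vector are identical.
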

\begin{proof} {Without loss of generality, assume that $v_i$ and $v_j$ are adjacent; the general 
case follows by iterating the argument along the cycle.} Let $C=v_iv_1\cdots v_jv_i$ be a cycle in $\vec{G}$. Consider the pointed cycles $C_i = v_i v_1 \cdots v_2 v_j v_i$ and $C_j = v_j v_i v_1 \cdots v_2 v_j$ in the cycle $C$. According to the statement, it holds that:
\begin{equation}\label{eqn:closed walk_i}
  w = w ({P}_{C,i})^k  \mbox{ where }  
{P}_{C,i} =  {A}_{v_jv_i} {A}_{v_2v_j} \cdots {A}_{v_iv_1}. 
\end{equation}
By multiplying~\eqref{eqn:closed walk_i} by the matrix ${A}_{v_jv_i}$ from the right, we get
\begin{align*}
     w {A}_{v_jv_i} &= w ( {A}_{v_jv_i} {A}_{v_2v_j} \cdots {A}_{v_iv_1})^k  {A}_{v_jv_i}  \\
     &= w  {A}_{v_jv_i} ( {A}_{v_2v_j} \cdots {A}_{v_iv_1} {A}_{v_jv_i} )^k = w {A}_{v_jv_i}  ({P}_{C,j})^k.  
\end{align*}
It is clear that the product $w {A}_{v_jv_i}$ is a weight vector. This shows that there exists a weight vector $w'$ such that $w'= w' ({P}_{C,j})^k$ and it is equal to $w  {A}_{v_jv_i}$.
\end{proof}

Paraphrasing the statement says that the condition $w = w(P_{C,i})^k$ is an invariant of the equivalence class $C \in \vec{\mathcal{C}}$. To be more precise, if it holds for one pointed cycle $C_i \in C$, then it holds (with a possibly different weight vector) for every pointed cycle $C_j \in C$. { With this fact, we introduce the notion of holonomy for a cycle as follows:}




\begin{defn}[Holonomic Stochastic Matrices]\label{def:non-trivial} Let $C$ be a cycle in $\vec{G}$ of length greater than $2$ and $w^\top \in \operatorname{int}\Delta^{nm-1}$ be a weight vector. The {\em $w$-order} of $C$ is defined as 
$$
\operatorname{ord}_wC := \min \{k\geq 1: \bar{w}=\bar{w}(\bar P_C)^k\},
$$
and $\operatorname{ord}_w C=0$ if the set is empty. The local stochastic matrices $A_{e}$,  $e \in C$,  are said to be \textbf{$w$-holonomic for $C$} if there exists a weight vector $w$ such that $\operatorname{ord}_wC$ is finite and non-zero.
\end{defn}

We observe that if $\bar{w} = \bar{w} (\bar{P}_{C})^k$ holds for some positive integer $k$, then $\bar{w}=\bar{w} (\bar{P}_{C})^{nk}$ holds for all $n \in \Nset$, thus making the set of integers $k$ for which $\bar{w} = \bar{w} (\bar{P}_{C})^k$ holds of infinite cardinality.



Moreover, by Lemma~\ref{lem:path_change}, $w$-order of a cycle is independent from the choice of the basepoint. We denote \textbf{the orbit set of a weight vector $w$ around a cycle $C$} as $\mathcal{O}_w^C$, that is,
\begin{equation}\label{eqn:orbit_set}
    \mathcal{O}_w^C:=\{ w_C^{(a)} \in \mathbb{R}^{nm} | w_C^{(a)} = w({P}_C)^a \mbox{ for }  a \in \mathbb{N}  \}.
\end{equation}
{Note that if $a=0$, the weight vector $w^{(0)}_C$ is equal to $w$ by construction for any cycle $C$.}





{ We now extend the notion of holonomy from individual cycles to the full graph $\vec{G}$. By Lemma~\ref{lem:path_change}, the following conditions depend 
only on the equivalence class $C \in \vec{\mathcal{C}}$, and not on the choice of representative pointed cycle.}

The local stochastic matrices $A_e$ are {\em $w$-holonomic for $G$} if there exists a {\em common} weight vector $w$ such that
\begin{enumerate}
    \item For every cycle $C \in \vec{\mathcal{C}}$ of length at least $3$, the matrices $A_e$ are $w$-holonomic for $C$ (i.e., $|\mathcal{O}_w^C|$ is finite).
    \item For any two distinct cycles $C_1, C_2 \in \vec{\mathcal{C}}$ of length at least $3$, the orbit sets of $w$ satisfy
    $$
    \mathcal{O}_w^{C_1} \cap \mathcal{O}_w^{C_2} = \{w\}.
    $$
\end{enumerate}
The second condition ensures that the orbit sets of distinct cycles intersect trivially.

{From Definition~\ref{def:non-trivial}, the $w$-order of a given cycle $C$ can vary as a function of $w$. For our purpose, we need to consider the $w$'s that yield the largest $w$-order and thus define the {\bf order of a cycle} as
\begin{equation}\label{eqn:order_def}
    \operatorname{ord} C := \sup_{w^\top \in \operatorname{int}\Delta^{nm-1}} \operatorname{ord}_w C. 
\end{equation}
Now, we can define the holonomy of a gossip process as follows.
\begin{itemize}
    \item If $\operatorname{ord} C = 0$, then the process on $C$ is {\em non-holonomic}.
    \item If $\operatorname{ord} C = 1$, then the process on $C$ is {\em holonomic}.    
    \item If $\operatorname{ord} C > 1$, then the process on $C$ is {\em finitely non-holonomic}.  
\end{itemize}
One can assign a (holonomy) group to the process on the cycle $C$ if $\operatorname{ord} C \geq 1$. If $\operatorname{ord} C = 1$, the cycle $C$ is said to have \textbf{trivial holonomy} since the corresponding group has only identity operation (trivial group). If $\operatorname{ord} C > 1$, the cycle $C$ is said to have \textbf{non-trivial holonomy} since the corresponding group is a cyclic group with an order greater than one.}

\textbf{Graph topology.} In an undirected graph $G$, two nodes $v_i$ and $v_j$ are called connected if the graph $G$ contains a path from $v_i$ to $v_j$. We need the following notion.

\begin{defn}[Bridge]\label{def:bridge}
Let $G=(V,E)$ be an undirected graph. Let $\mathcal{S}_G$ be the set of pairs of nodes that are connected {by a path} in $G$. Let $\Tilde{G}_e$ be the undirected graph obtained by removing the edge $e$ from $G$, that is, $\Tilde{G}_e=(V,E\setminus \{e\})$. If $|\mathcal{S}_{G}|$ is strictly greater than $|\mathcal{S}_{\Tilde{G}_e}|$, the edge $e$ is called a \textbf{bridge} (cut-edge) of $G$.
\end{defn}
A graph $G$ without a bridge is called \textbf{bridgeless}. {Connected bridgeless graphs are also equivalent to 2-edge-connected graphs~\cite[Thm 3.15]{merris2011graph}.} We record the following result characterizing cut-edges.
\begin{prop}\label{prop:bridge}
An edge $e$ in a connected graph $G$ is a bridge if and only if no cycles of $G$ contain both vertices adjacent to $e$.
\end{prop}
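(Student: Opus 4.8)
The plan is to prove Proposition~\ref{prop:bridge} as the standard characterization of bridges, arguing both directions by contraposition. Throughout, $G$ is connected, $e=(u,v)$, and $\Tilde{G}_e=(V,E\setminus\{e\})$ as in Definition~\ref{def:bridge}. I will use the basic fact that $e$ is a bridge if and only if $u$ and $v$ lie in different connected components of $\Tilde{G}_e$; this follows directly from Definition~\ref{def:bridge}, since removing $e$ can only disconnect pairs of nodes whose every connecting path used $e$, and such pairs exist precisely when $u,v$ themselves get separated.

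For the ``only if'' direction, I would prove the contrapositive: if some cycle of $G$ contains both $u$ and $v$, then $e$ is not a bridge. If $u$ and $v$ both lie on a cycle $C$, then $C$ provides a $u$–$v$ path; this path either avoids $e$ entirely (if $e\notin C$) or, if $e\in C$, deleting $e$ from $C$ leaves a $u$–$v$ walk in $\Tilde{G}_e$. Either way there is a $u$–$v$ path in $\Tilde{G}_e$, so $u$ and $v$ remain connected after removing $e$. Then for any pair $v_i,v_j$ connected in $G$, take a path between them in $G$; wherever it traverses $e$, reroute along the surviving $u$–$v$ path in $\Tilde{G}_e$, producing a walk in $\Tilde{G}_e$. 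Hence $\mathcal{S}_G=\mathcal{S}_{\Tilde G_e}$, so $e$ is not a bridge.

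For the ``if'' direction, again by contraposition: suppose $e$ is not a bridge, i.e. $u$ and $v$ are still connected in $\Tilde{G}_e$. Let $Q$ be a $u$–$v$ path in $\Tilde{G}_e$; since $e\notin E\setminus\{e\}$, the edge $e$ does not appear in $Q$. Then appending $e$ to $Q$ closes up a walk from $u$ back to $u$ that uses $e$ exactly once; extracting a simple cycle from this closed walk (it is nontrivial because $e$ appears in it and $Q$ does not use $e$) yields a cycle of $G$ containing both $u$ and $v$. This contradicts the hypothesis that no cycle contains both endpoints of $e$, completing the proof.

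The only mildly delicate point — the ``main obstacle,'' such as it is — is the bookkeeping in converting walks to paths and back: arguing cleanly that a closed walk containing a given edge exactly once contains a simple cycle through that edge, and that rerouting a path around the deleted edge genuinely stays within $\Tilde{G}_e$. These are routine facts about walks in graphs, so I would state them succinctly rather than belabor the induction. No results beyond Definition~\ref{def:bridge} and elementary connectivity are needed.
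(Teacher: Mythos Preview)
The paper does not actually prove Proposition~\ref{prop:bridge}; it simply cites \cite[Theorem~3.3]{merris2011graph} and moves on. Your argument is exactly the standard proof one finds in such a reference: contrapose in each direction, reroute paths around $e$ when a cycle through $u$ and $v$ is available, and close up a $u$--$v$ path in $\Tilde{G}_e$ with $e$ to exhibit a cycle when $e$ is not a bridge. It is correct as written, and the ``obstacles'' you flag (walk-to-path extraction, rerouting) are indeed routine and need no further elaboration.
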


See~\cite[Theorem 3.3]{merris2011graph} for a proof of Proposition~\ref{prop:bridge}. Paraphrasing, the statement says that every node in a connected, simple, bridgeless graph $G$ is covered by at least one cycle.

\textbf{Derived graphs and $\psi(\cdot)$ map.} We now focus on describing the allowable sequences of updates which yields the consensus at the limit for the gossip process. These will be defined via paths in what we call the {\em derived graph} of $G$ by $w$, denoted by $D_G(w)$. We present this graph as a {\em geometric} graph, with nodes embedded in $\mathbf{R}^{nm}$.

\begin{defn}[Derived Graph $D_G(\cdot)$]\label{def:derived_graph_w}
Let $G=(V,E)$ be a {\em matrix weighted} graph on $n$ nodes, with weights $A_e \in \mathbb{R}^{nm \times nm}$.
For a weight vector $w^\top \in \operatorname{int}\Delta^{nm-1}$, the derived graph of $G$ generated by $w$, denoted by $D_G(w)= (N_w, \vec{E}_w)$, is a directed matrix weighted graph, possibly with multi-edges and self-loops, with $N_w = \bigcup_{C \in \vec{\mathcal{C}} } \mathcal{O}_w^C$. 
{For $w_C^{(i)}, w_C^{(j)} \in \mathcal{O}_w^C$, there exists an edge $w_C^{(i)}w_C^{(j)} \in \vec{E}_w$ if $w_C^{(i)}=w_C^{(j)} P_C$ for a (pointed) cycle $C$; in this case, the edge weight is $P_C$. }
\end{defn}
{
The first condition in the definition of $w$-holonomy for $G$ (i.e. $|\mathcal{O}^C_w|$ is finite for all $C \in \vec{\mathcal{C}}$) ensures that the derived graph has a finite set of nodes.}

\begin{rem}\label{rem:geometric_graph}
Note that the derived graph being a geometric graph, ensures that elements of distinct orbit sets with the same coordinates {(i.e. the weight vector $w$)} correspond to a unique vertex in the derived graph. 
\end{rem}

{We introduce the mapping $\psi(\cdot)$, which assigns to an edge in $D_G(w)$ a cycle in $G$. Precisely,  let $e \in \vec{E}_w$ have weight $P_C$ with  $C=v_iv_{i+1}\cdots v_kv_i$. We set $$\psi(e)= v_iv_{i+1}\cdots v_kv_i.$$  We extend the domain of $\psi$ to the set of paths in ${D}_G(w)$ according to 
$$ \psi(\gamma \lor e) = \psi(e) \lor \psi(\gamma)$$
for any walk $\gamma$ in ${D}_G(w)$. } 
By a possible abuse of notation, we represent the path $\psi(e)$ by its vertex sequence while concatenation operation $\vee$ remains defined on the underlying edges.

Note that the order reversal in the above equation, {is} a change that is essential for maintaining coherence. The gossip process evolves as the left multiplication of local stochastic matrices while the paths in the derived graph $D_G(w)$ correspond to the right multiplication of matrices with row vectors.

We provide an example for the derivation of $D_G(w)$. 
\begin{exmp}\label{exmp:derivation}
Consider a simple, connected, bridgeless graph with matrix weights $A_e \in \mathbb{R}^{nm\times nm}$ as depicted in Figure~\ref{fig:butterfly_graph}. Consider the pointed cycles $C_1=v_1v_2v_3v_1$, $C_2=v_4v_1v_5v_4$ and $C_3=v_5v_7v_6v_5$ in $G$. Assume that the set of local stochastic matrices $A_e, e \in E$ is $w$-holonomic for $G$ and the corresponding orbit sets of the weight vector $w$ around each of these cycles are  
        \begin{align*}
          \mathcal{O}_w^{C_1} &= \{ w , w^{(1)}_{C_1} , \cdots ,  w^{(k_1-2)}_{C_1} ,w^{(k_1-1)}_{C_1} \} \\
         \mathcal{O}_w^{C_2} &= \{ w , w^{(1)}_{C_2} , \cdots , w^{(k_2-2)}_{C_2}  , w^{(k_2-1)}_{C_2} \} \\
          \mathcal{O}_w^{C_3} &= \{ w \}
        \end{align*}
where the $w$-order of the cycles $C_1$, $C_2$ and $C_3$ are $k_1,k_2$ and $1$, respectively.
\begin{figure}
\centering
\begin{tikzpicture}[scale=0.9]
		\node [circle,fill=black,inner sep=1pt,label=above:{\footnotesize $v_1$}] (b_i) at (3, 0) {};
		
		\node [circle,fill=black,inner sep=1pt,label=above:{\footnotesize $v_2$}] (b_j) at (1, 0.5) {};

		\node [circle,fill=black,inner sep=1pt,label=below:{\footnotesize $v_3$}] (b_l) at (1, -0.5) {};
        \node [circle,fill=black,inner sep=1pt,label=above:{\footnotesize $v_4$}] (b_g) at (5, 0.5) {};
        \node [circle,fill=black,inner sep=1pt,label=below:{\footnotesize $v_5$}] (b_q) at (5, -0.5) {};
	    
        \node [circle,fill=black,inner sep=1pt,label=below:{\footnotesize $v_6$}] (b_y) at (7, -0.5) {};
        \node [circle,fill=black,inner sep=1pt,label=above:{\footnotesize $v_7$}] (b_u) at (7, 0.5) {};

   \path[draw,,shorten >=2pt,shorten <=2pt]
		(b_i) edge[<->] (b_j)

        (b_j) edge[ <->] (b_l)
        (b_l) edge[<->] (b_i)	

        (b_i) edge[<->] (b_g)
        (b_g) edge[<->] (b_q)
        (b_i) edge[<->] (b_q)

        (b_y) edge[<->] (b_q)
        (b_y) edge[<->] (b_u)
        (b_u) edge[<->] (b_q)

		 ;
\end{tikzpicture}
\caption{The graph $\vec{G}$ } 
\label{fig:butterfly_graph}
\end{figure}

\begin{figure}
\centering
\begin{tikzpicture}[scale=0.9]
		
		\node [circle,fill=black,inner sep=1pt,label=above:{\footnotesize  ${w}_{C_2}^{(1)}$ }] (b_1) at (6, 1) {};
		\node [circle,fill=black,inner sep=1pt,label=below:{\footnotesize  ${w}_{C_2}^{(k_2-1)}$ }] (b_j) at (6, -1) {};

		\node [circle,fill=black,inner sep=1pt,label=above:{\footnotesize  ${w}_{C_2}^{(k_2-2)}$ }] (b_2) at (8, 0) {};

		\node [circle,fill=black,inner sep=1pt,label=below:{\footnotesize ${w}$}] (b_a) at (4, 0) {};


		\node [circle,fill=black,inner sep=1pt,label=above:{\footnotesize  ${w}_{C_1}^{(1)}$ }] (b_l) at (2, 1) {};
		\node [circle,fill=black,inner sep=1pt,label=below:{\footnotesize  ${w}_{C_1}^{(k_1-1)}$ }] (b_m) at (2, -1) {};

		\node [circle,fill=black,inner sep=1pt,label=above:{\footnotesize  ${w}_{C_1}^{(k_1-2)}$ }] (b_n) at (0, 0) {};
	
   \path[draw,every node/.style={sloped,anchor=south,auto=false},shorten >=2pt,shorten <=2pt]
		(b_a) edge[->] node {$e_{C_2}^{0}$} (b_1)
        (b_1) edge[dashed, ->]  (b_2)
		 (b_2) edge[->] node {$e_{C_2}^{k_2-2}$} (b_j)		 
		 (b_a) edge[<-] node {$e_{C_2}^{k_2-1}$} (b_j)

   (b_a) edge[->] node {$e_{C_1}^{0}$} (b_l)

		(b_a) edge[loop above, ->] node {$e_{C_3}$} (b_a)

      (b_n) edge[->] node {$e_{C_1}^{k_1-2}$} (b_m)
  (b_l) edge[dashed, ->]  (b_n)   
  (b_m) edge[->] node {$e_{C_1}^{k_1-1}$} (b_a)		 ;
\end{tikzpicture}
\caption{The graph $D_G(w)$} 
\label{fig:derived_graph_w}

\end{figure} 
We denote an edge $e$ in the graph $D_G(w)$ as $e_{C_i}^{k}$ if its weight is the matrix $P_{C_i}$ and its starting node is $w_{C_i}^{(k)}$. By abuse of notation, we denote a self-loop $e$ in the graph $D_G(w)$ as $e_{C_i}$ if its weight is the matrix $P_{C_i}$. Consider the path $\gamma:=  e_{C_1}^{k_1-1} \lor e_{C_3} \lor  e_{C_2}^{0}$ in the graph $D_G(w)$. ``Travelling'' over the path $\gamma$ in $D_G(w)$ translates into the matrix product ${w}_{C_1}^{(k_1-1)} P_{C_1} P_{C_3} P_{C_2} (={w}_{C_2}^{(1)})$. {Now, recall the definition of $\psi(\cdot)$ to show how to map a sequence of edges in derived graph $D_G(w)$ to a sequence of edges in $G$. We have the following:
\begin{align}
    \psi(\gamma) &=  \psi(e_{C_1}^{k_1-1} \lor e_{C_3} \lor  e_{C_2}^{0} ) \\  &=  \psi( e_{C_3} \lor  e_{C_2}^{0} ) \lor \psi(e_{C_1}^{k_1-1} ) \\ &=  \psi( e_{C_2}^{0} ) \lor  \psi( e_{C_3} ) \lor   \psi( e_{C_1}^{k_1-1} ) =  C_2C_3C_1
\end{align}} 
In this example, we showed how to derive $D_G(w)$ and map a given walk $\gamma$ in $D_G(w)$ to a sequence of edges in $G$ via $\psi(\cdot)$.  \end{exmp}

{We need the notion of an exhaustive walk to state the main result. We have adapted the following definition from~\cite{belabbas2021triangulated}:}

{\begin{defn}[Exhaustive Walk]\label{def:exhaustive}
A finite walk $\gamma$ in $D_G(w)$ is exhaustive if it visits every node of $D_G(w)$ at least once. An infinite walk $\gamma$ in $D_G(w)$ is exhaustive if it visits every node of $D_G(w)$ infinitely often.
\end{defn}}

\textbf{Main Theorem:} We can now present our main theorem which provides the conditions for the convergence of a non-homogeneous Markov process in the presence of absorbing states and non-trivial holonomy. { In particular, it provides the accumulation points of the infinite products $P_{\psi(\gamma)}$.}

\begin{thm}\label{thm:consensus}
Let $G=(V,E)$ be a simple, connected, bridgeless graph on $n$ nodes with matrix-valued edge weights $A_e$, $e \in E$.  Let  $w^\top \in \operatorname{int}\Delta^{nm-1}$ be such that the set of local stochastic matrices $\{A_{e}\in \mathbb{R}^{nm\times nm}, e \in E\}$ is $w$-holonomic for $G$. Then, for any infinite exhaustive walk $\gamma$ in $D_G(w)$, we have the following:

\begin{enumerate}
\item\label{itm:limit_set} The limit set of ${P}_{\psi(\gamma)}$ is a finite set $\mathcal{L}$. 

\item\label{itm:block} There exists a relabeling of the nodes such that each element of $\mathcal{L}$ can be expressed as   
\begin{equation}\label{eqn:limit_set}
    P_{\psi(\gamma)} = \left[\begin{array}{cc}
\tilde{P}_{\psi(\gamma)} & 0 \\
0 & M_{\psi(\gamma)} 
\end{array}\right] 
\end{equation}
where $\tilde{P}_{\psi(\gamma)} $ is a permutation matrix with rows columns indexed by the set $\cap_{C \in \vec{\mathcal{C}}}\pi_{P_C}$ 
and $M_{\psi(\gamma)}$ is a block diagonal matrix.   
\item\label{itm:rank_one} The blocks $M_{\psi(\gamma)}^{ii}$ of $M_{\psi(\gamma)}$ are rank-one matrices. 
\end{enumerate}
\end{thm}
{

Theorem~\ref{thm:consensus} states that, under the conditions given, the infinite product of stochastic matrices (possibly having absorbing states) whose order is provided by $\psi(\gamma)$ converges to a matrix in the limit set $\mathcal{L}$ for any infinite exhaustive walk $\gamma$ in the derived $D_G(w)$ and this limit set has finite cardinality (See assertion~\ref{itm:limit_set} in Theorem~\ref{thm:consensus}). In other words, any infinite exhaustive walk $\gamma$ in the derived $D_G(w)$ provides {\em an allowable sequence of edges} in $G$ via the mapping $\psi(\cdot)$, which corresponds to an allowable sequence of the updates for the gossip process by the associated matrix weights with edges in $G$.

Additionally, in Lemma~\ref{lem:exh_walk}, we show that any random walk in the derived graph is an exhaustive walk with probability one. Then, we have the following corollary to Theorem~\ref{thm:consensus} and Lemma~\ref{lem:exh_walk}:
\begin{cor}\label{cor:random_walk}
For a random walk $\gamma$ in $D_G(w)$, the limit set $\mathcal{L}$ of ${P}_{\psi(\gamma)}$ is finite  with probability one.  
\end{cor}
 
This corollary ensures that a sequence of updates provided by a random walk in the derived graph yields consensus in the limit for the gossip process with probability one. In other words, the gossip process maintains the same consensus properties even when the updates are generated by a random walk in the derived graph. 

Furthermore, Theorem~\ref{thm:consensus} describes the limit set $\mathcal{L}$ as follows: any matrix in the limit set $\mathcal{L}$ can be represented as a block diagonal matrix, with each block being either a permutation matrix or a rank-one matrix, up to some relabeling (See assertions~\ref{itm:block} and~\ref{itm:rank_one} in Theorem~\ref{thm:consensus}). }

{
\begin{rem}\label{rem:explicilt}
We provide an explicit description of the limit set $\mathcal{L}$ and the block $M_{\psi(\gamma)}^{ii}$ as a function of $w$ in the proof of Theorem~\ref{thm:consensus} in the next section. We show that there exists a partition of indexed set induced by $\psi(\gamma)$, denoted by $\pi^{\psi(\gamma)}$, such that 
$M_{\psi(\gamma)}^{ii} = \mathds{1} p_i^\top $ where
\begin{align*}\\[-2.2em]
    p_i^\top := \frac{ [{w}]_{j \in \pi^{\psi(\gamma)}_i}   }{\alpha_i} \mbox{ where } \alpha_i:=\sum_{j \in \pi^{\psi(\gamma)}_i} {w}_j.\\[-2.2em]
\end{align*}
The partition of the index set $\pi^{\psi(\gamma)}$ can be computed following the method described in~Section~\ref{sec:proof}. 
\end{rem}
}

{Theorem~\ref{thm:consensus}, particularly assertion~\ref{itm:rank_one}, provides a class-ergodic consensus (see~\cite[Defn. 4]{bolouki2013ergodicity}), in which the partition of the index set $\pi^{\psi(\gamma)}$ corresponds to the consensus cluster of the states of agents as discussed in~\cite{bolouki2013ergodicity,touri2012approximations}.}

The descriptions of the blocks $M_{\psi(\gamma)}^{ii}$ guarantee that they have no zero entries. It follows that if the transition matrix $P_C$ for a cycle $C$ has a $1$ in a row (e.g. having a standard unit vector as a row), then the $1$ in the row either shows up in the permutation block $\tilde{P}_{\psi(\gamma)}$ for a matrix in the limit set or disappears by converging a value as a function of $w$ in the block $M_{\psi(\gamma)}$. 

Furthermore, in our recent work~\cite{bayram2025construct}, we have introduced a numerical algorithm to construct (local) stochastic matrices that are $w$-holonomic for a given gossip network topology and a given weight vector for averaging across different consensus clusters.


\section{Analysis and Proofs of Theorems}\label{sec:proof}
In this section, we analyze the cycles that have nonzero order and then prove Theorem~\ref{thm:consensus}. The major contribution of this section is to develop a necessary condition for a cycle to have nonzero order and to provide a proof of Theorem~\ref{thm:consensus}. We start with an analysis of the spectrum of the product of local stochastic matrices and its relationship with holonomy.

\subsection{Cycles with Nonzero Order}\label{subsec:nonzero_order}
Let  $S^1 := \{z \in \Cval: |z| = 1 \}$ and let $S^{1^*} := S^1 \backslash  \{1\}$. The following lemma provides a necessary condition for a cycle to have non-trivial holonomy in $G$.
\begin{lem} \label{lem:unit_circle}
If a cycle $C$ has non-trivial holonomy, then the matrix $\bar{P}_{C}$ has at least one eigenvalue in the set $S^{1^*}$.
\end{lem}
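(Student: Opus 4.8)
The plan is to derive from non-trivial holonomy a positive (sub)vector that is fixed by a \emph{power} of $\bar P_C$ but not by $\bar P_C$ itself, and then to read off a nontrivial root-of-unity eigenvalue from the cyclic action of $\bar P_C$ on the span of this vector's orbit.

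First I would unwind the definitions. Non-trivial holonomy of $C$ means $\operatorname{ord} C>1$. Since $\operatorname{ord} C=\sup_{w}\operatorname{ord}_wC$ is a supremum of nonnegative integers, $\operatorname{ord} C>1$ yields a weight vector $w^\top\in\inte\Delta^{nm-1}$ with $\operatorname{ord}_wC=k$ for some integer $k\ge 2$. Writing $Q:=\bar P_C$, the definition of $\operatorname{ord}_wC$ as a \emph{minimum} then gives both $\bar w Q^{k}=\bar w$ and $\bar w Q\neq \bar w$ (the latter because $k\ge 2$ is the smallest exponent that works). Note also $\bar w\neq 0$, as $w$ has only positive entries.

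Next I would pass to the $Q$-invariant subspace spanned by the orbit of $\bar w$. Put $u_j:=\bar w Q^{j}$ for $j=0,\dots,k-1$ and let $W:=\operatorname{span}_{\C}\{u_0,\dots,u_{k-1}\}$, a nonzero complex row subspace (it contains $u_0=\bar w$). Since $u_jQ=u_{j+1}$ for $j<k-1$ and $u_{k-1}Q=\bar w Q^{k}=u_0$, the subspace $W$ is invariant under right multiplication by $Q$; call the resulting operator $T\colon W\to W$. For every $j$ one has $u_jQ^{k}=\bar w Q^{j+k}=\bar w Q^{k}Q^{j}=u_j$, so $T^{k}=\mathrm{id}_W$. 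Hence $T$ is annihilated by $x^{k}-1$, which is squarefree over $\C$; therefore $T$ is diagonalizable and all of its eigenvalues are $k$-th roots of unity.

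Finally I would rule out that $1$ is the only eigenvalue of $T$: a diagonalizable operator whose sole eigenvalue is $1$ must equal $\mathrm{id}_W$, which would force $\bar w Q=T\bar w=\bar w$ (recall $\bar w=u_0\in W$), contradicting $\bar w Q\neq\bar w$. So $T$ has an eigenvalue $\lambda$ with $\lambda^{k}=1$ and $\lambda\neq 1$, i.e.\ $\lambda\in S^{1^*}$; the corresponding eigenvector lies in $W$ and, by $Q$-invariance of $W$, is a left eigenvector of $Q=\bar P_C$, so $\bar P_C$ (equivalently $\bar P_C^{\top}$, which has the same spectrum) has an eigenvalue in $S^{1^*}$, as claimed. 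The step I expect to be the most delicate is the first one — correctly extracting ``fixed by a power but not by the matrix itself'' from the supremum defining $\operatorname{ord} C$ together with the minimality built into $\operatorname{ord}_w C$; once that is in hand, the rest is elementary linear algebra over $\C$. A more hands-on alternative is to exhibit the eigenvalue explicitly via $z:=\sum_{j=0}^{k-1}e^{-2\pi ij/k}u_j$, which satisfies $zQ=e^{2\pi i/k}z$, but that route additionally requires verifying $z\neq 0$, an issue the invariant-subspace argument sidesteps.
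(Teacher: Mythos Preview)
Your proof is correct and follows the same approach as the paper: extract from non-trivial holonomy a vector $\bar w$ with $\bar w\,\bar P_C^{\,k}=\bar w$ but $\bar w\,\bar P_C\neq\bar w$, and conclude that $\bar P_C$ has a $k$-th root of unity other than $1$ in its spectrum. Your invariant-subspace argument (showing $T^k=\mathrm{id}_W$ is diagonalizable and ruling out $T=\mathrm{id}_W$) makes rigorous a step the paper's two-line proof leaves implicit, namely why $\bar w\,\bar P_C\neq\bar w$ forces the root of unity to be nontrivial rather than merely forcing $\bar w$ out of the $1$-eigenspace.
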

\begin{proof}
Let $\sigma(\bar{P}_{C})$ be the spectrum of the matrix $\bar{P}_{C}$. If $C$ has non-trivial holonomy, then there exists  $\bar{w}$ such that $\bar{w}=\bar{w}(\bar{P}_{C})^k$, for some $k>1$  and $\bar{w}\neq \bar{w}(\bar{P}_{C})$. The former condition implies that $\exists \lambda \in \sigma(\bar{P}_{C})$ such that $\lambda^k = 1$ or, equivalently, $\lambda \in S^1$ and the latter implies that $\lambda \neq 1$, from which the result follows. 
\end{proof} 
Motivated by the previous Lemma, we now seek to describe stochastic matrices whose spectra have non-empty intersections with $S^{1^*}$. Note that the spectral radius of a stochastic matrix is $1$ and its spectral circle is $S^1$. 

{A matrix $A$ with order $n$ is {\em reducible} if there exists a permutation matrix $P \in S_n$ such that
\begin{equation}\label{eqn:factorization}
   P^\top A P =\left[\begin{array}{cc}
A_{11} & A_{12} \\
0 & A_{22}
\end{array}\right] 
\end{equation}
where $A_{11}$ and {$A_{22}$} are nonempty square matrices. If $A$ is not reducible, then $A$ is called {\em irreducible}. For convenience, we denote similarity through the permutation matrix $P$ by $\sim_P$.}

Let $A$ be a reducible matrix. We know that $A \sim_P B$ where $B$ is {a block upper triangular matrix} of the form:
\begin{equation}\label{eqn:canonical_form}
  B=\left[ \begin{smallmatrix}
B_{11}  & \cdots &  B_{1r} &  B_{1,r+1} & \cdots   & B_{1m}  \\
 & \ddots &  \vdots  & \vdots & \vdots &  \vdots \\
 &  &  B_{rr}   & B_{r,r+1} & \cdots   &  B_{rm} \\
 &   &   &  B_{r+1,r+1} &  &  \textbf{0}  \\
& & &  & \ddots &  \\
 \textbf{0}  &   &    &    &  & B_{mm} \\
\end{smallmatrix}\right] 
\end{equation}
where each principal block $B_{ii}$ is either irreducible or the zero matrix~\cite{meyer2000matrix};  we say that the form of $B$ given in~\eqref{eqn:canonical_form} is a {\em canonical form for reducible matrices}. In the literature, the subset of states corresponding to $B_{kk}$ for $1 \leq k \leq r$ is called the $k^{th}$ {\em transient class} of matrix $B$ and the subset of states corresponding to $B_{r+j,r+j}$ for $j \geq 1$ is called the $j^{th}$ {\em ergodic class} of matrix $B$. 
\begin{lem}\label{lem:irreducible_blocks}
Let $C$ be a cycle with nonzero order. Then, there exists a permutation matrix $P$ such that $\bar{P}_C \sim_P B$ where $B$ is as in~\eqref{eqn:canonical_form} with $r=0$ and $m \geq 1$.
\end{lem}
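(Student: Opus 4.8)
The plan is to show that the canonical form~\eqref{eqn:canonical_form} of $\bar P_C$ cannot have any transient class, i.e. that $r=0$. Suppose for contradiction that $r\geq 1$, so that after conjugating by a permutation matrix $P$ we may write $\bar P_C \sim_P B$ with $B$ in the form~\eqref{eqn:canonical_form} and with at least one transient block $B_{11}$. Since $C$ has nonzero order, by Definition~\ref{def:non-trivial} there is a weight vector $w$ (with $w^\top\in\inte\Delta^{nm-1}$, hence $\bar w$ has all positive entries) and an integer $k\geq 1$ with $\bar w=\bar w(\bar P_C)^k$. Conjugating, the vector $u:=\bar w P$ (a positive vector, since $P$ is a permutation) satisfies $u=uB^k$, so $u$ is a positive left eigenvector of $B^k$ for the eigenvalue $1$.

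First I would recall the elementary fact about the block-triangular structure: if $u=uB^k$ and we split $u=(u_T\mid u_E)$ according to the transient states (rows/columns $1$ through the end of block $B_{rr}$) and the ergodic states, then reading the block equation restricted to the transient columns gives $u_T = u_T (B_T)^k$, where $B_T$ is the block-upper-triangular principal submatrix $[B_{ij}]_{1\le i,j\le r}$ formed by the transient classes (the ergodic rows contribute nothing to the transient columns because $B$ is block upper triangular). So it suffices to derive a contradiction from the existence of a positive row vector $u_T$ fixed by $(B_T)^k$. The key point is that $B_T$ is a substochastic matrix which is strictly substochastic: because $B$ is a stochastic matrix in canonical form with $r\ge 1$, at least one transient class must have an edge leaving it into a later class (otherwise that class would itself be ergodic/closed, contradicting its placement among the transient blocks $B_{11},\dots,B_{rr}$ in the canonical ordering). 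Hence some row of $B_T$ sums to strictly less than $1$, while every row of $B_T$ sums to at most $1$.

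Then I would finish by a norm/Perron argument. From $u_T=u_T(B_T)^k$ and $u_T>0$, sum the entries: $\mathds{1}^\top$ is a right vector with $B_T\mathds{1}\le \mathds{1}$ entrywise and $B_T\mathds{1}\ne\mathds{1}$, so $(B_T)^k\mathds{1}\le\mathds{1}$ with strict inequality in at least one coordinate (strictness propagates under a nonnegative matrix as long as the coordinate is reachable; since transient classes all eventually reach the leak, $(B_T)^k\mathds{1}<\mathds{1}$ in every coordinate once $k$ is large, and in any case $\le$ with at least one strict entry). Pairing with the strictly positive vector $u_T$ gives $u_T\mathds{1} = u_T (B_T)^k\mathds{1} < u_T\mathds{1}$, a contradiction. (Equivalently: $\rho(B_T)<1$ since $B_T$ is a substochastic matrix none of whose recurrent-in-$B$ classes lie in the transient part, so $(B_T)^k$ has spectral radius $<1$ and cannot fix a nonzero vector.) This forces $r=0$, so $B$ consists solely of ergodic diagonal blocks $B_{r+j,r+j}$ each irreducible or zero; relabel these as $B_{11},\dots,B_{mm}$ to get the claimed form with $r=0$ and $m\ge 1$ (we have $m\ge1$ because $\bar P_C$ is a nonempty matrix).

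The main obstacle is the clean justification that $r\ge 1$ in the canonical form forces a strict leak out of the transient part, i.e. that $B_T$ is genuinely strictly substochastic and has spectral radius below $1$; this is standard Perron–Frobenius / Markov chain theory (a transient class cannot be closed), but it needs to be stated carefully because the blocks $B_{ii}$ are allowed to be zero matrices, so one must argue with the row sums of $B$ rather than irreducibility of $B_T$ alone. Once that is in hand, the contradiction with the positivity of $\bar w$ (inherited from $w\in\inte\Delta^{nm-1}$) is immediate.
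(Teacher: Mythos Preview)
Your argument is correct and follows the same route as the paper's proof: both place $\bar P_C$ in the canonical form~\eqref{eqn:canonical_form} and argue that the transient diagonal blocks, having spectral radius strictly less than $1$, are incompatible with the positive left fixed vector $\bar w$ supplied by $\operatorname{ord}_w C>0$. Your version is more explicit about restricting the equation $\bar w=\bar w(\bar P_C)^k$ to the transient coordinates and about using the strict positivity of $\bar w$ (from $w^\top\in\inte\Delta^{nm-1}$), whereas the paper compresses this into the single assertion that ``since $\operatorname{ord} C>0$, the matrix $B$ cannot have convergent submatrices on the diagonal.''
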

\begin{proof} 
If the matrix $\bar{P}_C$ is irreducible, then the result trivially holds for $P=I$. If the matrix $\bar{P}_C$ is reducible, by definition, there exists a permutation matrix $P$ such that $\bar{P}_C \sim_P B$ where $B$ as in 
\eqref{eqn:canonical_form}. The principal submatrices $B_{ii}$ for $i \leq r$ act solely on the transient {class} of $\bar{P}_C$. Therefore, $\rho(B_{ii}) < 1$ for $i \leq r$. This then implies that the matrices $B_{ii}$, $i \leq r$ are convergent. Since $\operatorname{ord} C > 0$, the matrix $B$ cannot have convergent submatrices on the diagonal, from which the result follows.
\end{proof}
Paraphrasing, the statement says that the matrix $\bar{P}_C$ is permutationally similar to a block-diagonal matrix $B$, where the principal blocks are irreducible matrices. {Moreover, it contains no convergent matrix blocks, since $r$, which is the index of the last convergent block in~\eqref{eqn:canonical_form}, is zero and $m$, which is the index of the last irreducible block, is greater than zero. Then, all the principal diagonal blocks belong to the ergodic class of matrix $B$, as a result of which the off-diagonal blocks are zero, so the principal blocks themselves form a stochastic matrix.} Note that a permutation matrix is an irreducible stochastic matrix. Isolating the permutation part of $P_C$, we can further write, up to relabeling, that 
\begin{equation}\label{eqn:factorization_Pc}
  P_C \sim_P \!\left[\begin{smallmatrix}
B^{00}_C & 0\\
0 & M_C
\end{smallmatrix}\right] \!=\!\begin{blockarray}{ccccc}
\pi^C_0     & \pi^C_1 & \cdots  & \pi^C_m    &  \\
\begin{block}{[cccc]c}
B^{00}_C    &          &        & \textbf{0} & \pi^C_0 \\
            & B^{11}_C &        &            & \pi^C_1 \\
            &          & \ddots &            & \vdots \\
\textbf{0}  &          &        & B^{mm}_C   & \pi^C_m  \\
\end{block}
\end{blockarray}%
\end{equation}
where the principal submatrix $B^{00}_C$ is a permutation matrix. For convenience, we denote the irreducible blocks $B_{r+j,r+j}$ in \eqref{eqn:canonical_form} by $B^{jj}_C$.

{Let $\pi^C_j$ be the set of indices labeling the rows/columns in matrix $B$ that forms the block matrix ${B^{jj}_C}$ after relabelling $P_C$ through the permutation matrix $P$.} It then follows that we have a partition of the index set $\{1,\cdots,nm\}$ induced by $C$, denoted by $\pi^C$, precisely $\pi^C:=\{\{\pi^C_i\}_{i=0}^m\}$. {As a matter of convention, we use the notation $\pi^C_0$ interchangeably with $\pi_{P_C}$ to refer to the largest index set among the sets that index the permutation submatrices in $P_C$. We will revisit the concept of the partition of the index set induced by a cycle in the next subsection.} We now aim to better understand irreducible principal blocks of the matrix $M_C$~{in~\eqref{eqn:factorization_Pc}}. Hence, without loss of generality, we can assume that $\bar{P}_C$ is irreducible (e.g., only the $B^{11}_C$ block is nontrivial).

More is known about irreducible stochastic matrices. Let $A$ be an irreducible stochastic matrix; then there exists a unique vector $p$ satisfying
\begin{equation}\label{eqn:perron}
    A p = p \mbox{,  } p>0 \mbox{ and } \|p\|_1=1 
\end{equation}
which is called the {\em Perron vector}. {A stochastic matrix $A$ is called a scrambling matrix if no pair of rows of $A$ are orthogonal~\cite{chen2022gossip}.} The Perron-Frobenius Theorem says that an irreducible {\em primitive} stochastic matrix $A$ converges to a scrambling matrix whose rows are equal to the transpose of the Perron vector of $A^\top$~\cite{meyer2000matrix}, which is called the row Perron vector of the matrix $A$. To be more precise, $\lim_{k \to \infty} A^k = \mathds{1} q^\top$ where $A^\top q=q$.

{
The spectral radius of a matrix $A$ is the maximum of the modulus of the elements of its spectrum, denoted by $\rho(A)$. A circle on $\mathbb{C}$ with radius $\rho(A)$ is called {\em spectral circle} of the matrix $A$. A nonnegative irreducible matrix $A$ having $h>1$ eigenvalues on its spectral circle is called {\em imprimitive}, and $h$ is referred to as the index of imprimitivity. If there is only one eigenvalue on the spectral circle of $A$, then the matrix $A$ is {\em primitive}.
}

We now recall an extension of the Perron-Frobenius Theorem.
\begin{lem}[Frobenius Form]\label{lem:fro_form}
For each imprimitive matrix $A$ with index of imprimivity $h > 1$, there exists a permutation matrix $P$ such that $A \sim_P F$ where $F$ is of the form:
\begin{equation}
F=\left[\begin{smallmatrix}
\textbf{0} & A_{12} & 0 &  \cdots& 0 \\
0 & \textbf{0} & A_{23} &  \cdots  & 0 \\
\vdots &  \vdots & \ddots & \ddots &\vdots \\
0 &  0 & \cdots & \textbf{0} & A_{h-1,h} \\
A_{h1} &  0 & \cdots & 0 & \textbf{0}
\end{smallmatrix}\right],   
\end{equation}
where the zero blocks, denoted by $\textbf{0}$, on the main diagonal are square.     
\end{lem}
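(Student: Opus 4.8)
The plan is to produce the permutation $P$ combinatorially from the directed graph associated with $A$: partition the index set $\{1,\dots,n\}$ into $h$ classes by reducing walk-lengths modulo $h$, and then show that $A$ carries each class cyclically onto the next. This is the classical Frobenius normal form for irreducible imprimitive matrices, so the work is in setting up the right invariant and checking it is well defined.

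First I would identify the index of imprimitivity $h$ with the period $\omega^{A}=\gcd\{m:[A^{m}]_{ii}>0\}$ of the (irreducible) matrix $A$; this is the Perron--Frobenius fact that the number of eigenvalues of an irreducible nonnegative matrix on its spectral circle equals its period (see~\cite{meyer2000matrix,kitchens1997symbolic}). In particular $h$ divides the length of every closed walk through any fixed vertex. Next, fix a vertex $i$; since $A$ is irreducible its digraph is strongly connected, so every vertex $j$ is reachable from $i$, and I define $r(j)\in\{0,\dots,h-1\}$ to be the length of some directed walk $i\to j$, reduced modulo $h$. (Note $h>1$ forces $n\ge 2$, so these walks are nontrivial.)

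The step I expect to be the crux is showing $r(j)$ is well defined, i.e.\ independent of the chosen walk. If two walks $i\to j$ have lengths $\ell_{1},\ell_{2}$, then choosing (again by irreducibility) a walk $j\to i$ of some length $\ell_{3}$, both $\ell_{1}+\ell_{3}$ and $\ell_{2}+\ell_{3}$ are lengths of closed walks through $i$, hence both divisible by $h$, so $\ell_{1}\equiv\ell_{2}\pmod h$. This yields a partition $\{1,\dots,n\}=C_{0}\sqcup C_{1}\sqcup\cdots\sqcup C_{h-1}$ with $C_{r}=\{j:r(j)=r\}$.

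Finally I would verify the cyclic structure and assemble $P$. If $A_{jk}>0$, extending a walk $i\to j$ of length $\ell$ by the edge $j\to k$ shows $r(k)\equiv\ell+1\equiv r(j)+1\pmod h$; since $h>1$, no edge stays within a class and no edge from $C_{r}$ reaches $C_{s}$ unless $s\equiv r+1\pmod h$. Each $C_{r}$ is nonempty, since $i\in C_{0}$ and an irreducible matrix of order $\ge 2$ has no zero row, so every vertex of a nonempty $C_{r}$ has an out-edge, necessarily landing in $C_{r+1\bmod h}$; hence all classes are nonempty. Taking $P$ to be the permutation matrix that lists the indices in $C_{0}$ first, then those in $C_{1}$, and so on up to $C_{h-1}$, the matrix $P^{\top}AP$ has exactly the claimed form $F$: its only nonzero blocks are the superdiagonal blocks $A_{r,r+1}$ (edges $C_{r-1}\to C_{r}$) for $r=2,\dots,h$ and the corner block $A_{h1}$ (edges $C_{h-1}\to C_{0}$), while the diagonal blocks are the square $|C_{r}|\times|C_{r}|$ zero blocks.
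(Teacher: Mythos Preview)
Your argument is correct and is the standard combinatorial proof of the Frobenius cyclic form: identify $h$ with the period, partition vertices by walk-length from a fixed base vertex modulo $h$, check well-definedness via closed-walk divisibility, and read off the cyclic block structure. Each step is sound.

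As for comparison: the paper does not supply its own proof of this lemma. Immediately after the statement it writes ``See~\cite{meyer2000matrix} for a proof of Lemma~\ref{lem:fro_form}'' and moves on, treating the result as classical background. Your proof is precisely the argument one finds in such references (Meyer, Horn--Johnson, Kitchens), so there is no substantive difference in approach to discuss---you have simply filled in what the paper outsources to the literature.
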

See~\cite{meyer2000matrix} for a proof of Lemma~\ref{lem:fro_form}. This is known as {\em Frobenius Form} for an irreducible imprimitive matrix. Then, the matrix $F^h$ is a block diagonal matrix with blocks that are primitive.

\subsection{Permutation Blocks}\label{subsec:perm_block}
We now aim to gain a better understanding of permutation matrices. We demonstrate that the block matrix $B^{00}_C$ is similar to a matrix that exhibits a permutation block structure after relabeling in~\eqref{eqn:factorization_Pc}. The following proposition establishes that the matrix $B^{00}_C$ itself possesses a permutation block structure. 

{We call a matrix $A$ conjugate to a permutation matrix if there exists a matrix $D$ in the group of invertible matrices, denoted by $\mathbb{GL}(n)$, and a permutation matrix $P\in S_n$ such that $A=DPD^{-1}$}

\begin{prop}\label{prop:no_similiar_to_perm}
If $A$ is a stochastic matrix which is conjugate to a permutation matrix, then $A$ is a permutation matrix.
\end{prop}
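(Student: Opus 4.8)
The plan is to use the fact that a permutation matrix is an orthogonal matrix, so all its eigenvalues lie on the unit circle, and in fact it is diagonalizable (over $\C$) with eigenvalues that are roots of unity. Suppose $A = Q^{-1} \Pi Q$ for a permutation matrix $\Pi$ and an invertible matrix $Q$, and suppose $A$ is (row) stochastic. Since similar matrices have the same spectrum, $A$ has all eigenvalues on $S^1$ and is diagonalizable. I would first record that a stochastic matrix $A$ with $\rho(A)=1$ and {\em every} eigenvalue on the spectral circle, together with diagonalizability, is extremely constrained: writing $A = \mathds{1}\mathds{1}^\top$-type arguments aside, the key point is that $\|A^k\|$ stays bounded (because $A$ is diagonalizable with spectrum on $S^1$, its powers are bounded), so no Jordan blocks grow, and combined with nonnegativity and each row summing to $1$ this forces $A$ to be nonnegative with bounded powers and spectral radius exactly $1$ attained only semisimply.

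The cleanest route, I think, is spectral/combinatorial rather than analytic. Step one: since $A$ is stochastic, decompose it (up to a permutation similarity, which preserves both ``stochastic'' and ``conjugate to a permutation'') into the canonical form~\eqref{eqn:canonical_form}; the transient blocks $B_{ii}$, $i \le r$, have $\rho(B_{ii}) < 1$. But $A$ is conjugate to a permutation matrix, whose eigenvalues all have modulus $1$; the spectrum of $A$ is the union of the spectra of all diagonal blocks, so a block with spectral radius $< 1$ would contribute an eigenvalue of modulus $< 1$ to $A$, contradicting $|\lambda| = 1$ for all $\lambda \in \sigma(A)$ (note $0$ is an eigenvalue of any zero block, also a contradiction, so there are no zero blocks either). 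Hence $r = 0$ and every diagonal block $B_{ii}$ is irreducible and stochastic. Step two: reduce to the case $A$ irreducible and stochastic. For an irreducible stochastic matrix, if it is primitive then by Perron--Frobenius $A^k \to \mathds{1}q^\top$, a rank-one limit; but $A^k = Q^{-1}\Pi^k Q$ and $\Pi^k$ takes only finitely many values (as $\Pi$ has finite order), so $A^k$ takes only finitely many values and cannot converge to a rank-one matrix unless $A = \mathds{1} q^\top$ itself — but that matrix is not invertible for $n \ge 2$ while it must be conjugate to a permutation (hence invertible), so $n = 1$ and $A = [1]$, a $1\times 1$ permutation matrix. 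If $A$ is irreducible imprimitive with index $h > 1$, put it in Frobenius form (Lemma~\ref{lem:fro_form}); then $A^h$ is block diagonal with {\em primitive} stochastic blocks, and applying the primitive case to each block gives that each block is $1\times 1$, i.e. $A^h = I$, and an irreducible stochastic matrix with $A^h = I$ whose digraph is a single $h$-cycle is exactly a cyclic permutation matrix. In either sub-case the irreducible block is a permutation matrix; assembling the blocks, $A$ itself is a permutation matrix.

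An alternative, possibly shorter, step two: once $A$ is stochastic with all eigenvalues on $S^1$ and semisimple, note $\mathds{1}$ is a right eigenvector for eigenvalue $1$; pick a left eigenvector $p^\top \ge 0$ for eigenvalue $1$ (exists since $A$ is stochastic). If some row of $A$ had two nonzero entries, one can use a strict-convexity / ``scrambling'' argument on the bounded orbit $\{A^k\}$ to contradict $\|A^k\|$ not shrinking the relevant semi-norm — but this is more delicate than the Frobenius-form bookkeeping, so I would keep it as a backup.

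The main obstacle I anticipate is the imprimitive irreducible case: making rigorous the claim that an irreducible stochastic $A$ with $A^h = I$ (equivalently, all of whose powers form a finite group under the stochastic structure) must literally be a single cyclic permutation — one has to argue that irreducibility plus the block-cyclic Frobenius structure plus ``$h$ eigenvalues on the circle, each simple after passing to $A^h$'' pins down each off-diagonal block $A_{i,i+1}$ to be itself a permutation (indeed $1\times 1$ after the canonical reduction), rather than merely stochastic. Handling the bookkeeping of how the overall permutation-conjugacy of $A$ distributes across the Frobenius blocks (and ensuring invertibility is inherited by each block) is where I expect to spend the most care.
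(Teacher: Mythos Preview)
Your proposal is correct but takes a considerably longer route than the paper. The paper's proof is essentially two lines: since $\Pi$ is a permutation matrix it has finite order, so $\Pi^k = I$ for some $k \ge 1$; hence $A^k = Q^{-1}\Pi^k Q = I$, which means $A^{-1} = A^{k-1}$ is a product of stochastic matrices and therefore stochastic. The paper then invokes Lemma~\ref{lem:invertible} (a stochastic matrix whose inverse is also stochastic must be a permutation) to conclude. In fact the paper records, as an immediate corollary (Lemma~\ref{lem:perm_matrix}), precisely the statement you are laboriously reconstructing: a stochastic matrix with all eigenvalues on the unit circle is a permutation matrix. Had you simply cited that, your first observation (that conjugacy preserves the spectrum) would finish the proof instantly.

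Your structural approach via the canonical form, Perron--Frobenius, and the Frobenius cyclic form is sound and has the merit of being self-contained (it does not rely on Lemma~\ref{lem:invertible}), but it is substantially more work. Regarding the obstacle you flag in the imprimitive case: it is lighter than you fear. You do \emph{not} need to distribute the conjugacy across Frobenius blocks. Once you know every eigenvalue of $A$ lies on $S^1$, the same holds for $A^h$, hence for each primitive diagonal block of $A^h$; a primitive stochastic matrix has a \emph{unique} eigenvalue on its spectral circle, so each such block is $1\times 1$. The Frobenius form of $A$ then has $1\times 1$ off-diagonal blocks, and row-stochasticity forces each of them to equal $1$, so $A$ is a cyclic permutation on each irreducible component. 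This closes the gap without any bookkeeping about how the global conjugacy interacts with the block structure.
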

To prove {Proposition}~\ref{prop:no_similiar_to_perm}, we use the following lemma.
\begin{lem} \label{lem:invertible}
A matrix $A$ is a stochastic matrix with $A^{-1}$ also a stochastic matrix if and only if $A$ is a permutation matrix.
\end{lem}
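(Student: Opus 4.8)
\textbf{Plan for proving Lemma~\ref{lem:invertible}.} The statement is an ``if and only if,'' and the backward direction is immediate: if $A$ is a permutation matrix then $A^{-1}=A^\top$ is also a permutation matrix, hence stochastic. So the substance is the forward direction: assuming $A$ and $A^{-1}$ are both (row) stochastic, conclude $A$ is a permutation matrix. The plan is to argue entrywise nonnegativity plus the doubly-stochastic structure that the two-sided condition forces, and then invoke the extreme-point characterization of the Birkhoff polytope, or give a direct combinatorial argument.

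\textbf{Key steps, in order.} First I would observe that $A$ stochastic means $A\mathds{1}=\mathds{1}$ and $A\ge 0$; likewise $A^{-1}\ge 0$ and $A^{-1}\mathds{1}=\mathds{1}$. From $A\mathds{1}=\mathds{1}$ and $A^{-1}\mathds{1}=\mathds{1}$ we get $\mathds{1}=A^{-1}\mathds{1}$, which is automatic, so that alone is not enough; the real leverage comes from looking at column sums. Taking $\mathds{1}^\top A = c^\top$ for the (row) vector $c$ of column sums, and using $A^{-1}\ge 0$, $A^{-1}\mathds{1}=\mathds{1}$: from $A A^{-1} = I$ we have $\mathds{1}^\top = \mathds{1}^\top A A^{-1} = c^\top A^{-1}$. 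Since $A^{-1}$ is nonnegative with all row sums $1$, the $j$-th entry of $c^\top A^{-1}$ is a convex combination of the entries of $c$; for every such convex combination to equal $1$, and since $\sum_j c_j = n$ (total sum of all entries of $A$ is $n$) forces the average of the $c_j$ to be $1$, one deduces $c_j=1$ for all $j$ when combined with the fact that a convex combination hitting the value $1$ for each coordinate while the mean is $1$ pins everything down --- more cleanly: $c^\top A^{-1}=\mathds{1}^\top$ and $A^{-1}\mathds{1} = \mathds{1}$ give $c^\top \mathds{1} = \mathds{1}^\top \mathds{1} = n$, and then a short argument (or: apply the same reasoning to $A^{-1} A = I$) shows $c = \mathds{1}$, i.e., $A$ is doubly stochastic. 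By the same token $A^{-1}$ is doubly stochastic. Second, I would invoke the Birkhoff--von Neumann theorem: the doubly stochastic matrices form a polytope whose extreme points are exactly the permutation matrices, and the invertible elements of this polytope whose inverse also lies in the polytope must be extreme points --- indeed if $A = \tfrac12(B+C)$ with $B,C$ doubly stochastic and distinct, then $A^{-1}$ doubly stochastic is impossible unless $B=C$ (a group-of-units argument: the only invertible elements of the monoid of doubly stochastic matrices under multiplication whose inverse is again doubly stochastic are the permutation matrices, since $\|A x\|_1 \le \|x\|_1$ with equality structure forcing $A$ to permute coordinates). Concretely, for any vector $x$, $\|Ax\|_1 \le \|x\|_1$ when $x\ge 0$ (and in general with the right normalization), and applying this to $A$ and $A^{-1}$ forces equality throughout, which forces each row and column of $A$ to have a single nonzero entry, necessarily equal to $1$. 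That yields $A$ is a permutation matrix.

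\textbf{Alternative self-contained route.} Rather than quoting Birkhoff, one can finish purely combinatorially once $A$ is shown doubly stochastic: consider $A^{-1}A = I$. The $(i,i)$ entry is $\sum_k (A^{-1})_{ik} A_{ki} = 1$, and the $(i,j)$ entry for $i\ne j$ is $\sum_k (A^{-1})_{ik} A_{kj} = 0$; since all terms are nonnegative, $(A^{-1})_{ik} A_{kj} = 0$ for all $k$ whenever $i \ne j$. Fix $i$ and let $k$ be any index with $(A^{-1})_{ik} > 0$; then $A_{kj} = 0$ for all $j \ne i$, so row $k$ of $A$ is $e_i^\top$. Thus the support of row $i$ of $A^{-1}$ indexes rows of $A$ that are all equal to $e_i^\top$; since rows of $A$ are distinct as $A$ is invertible, row $i$ of $A^{-1}$ has a single nonzero entry, and being a row of a stochastic matrix that entry is $1$. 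Hence $A^{-1}$ is a permutation matrix, and therefore so is $A = (A^{-1})^{-1} = (A^{-1})^\top$.

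\textbf{Main obstacle.} The only non-routine point is establishing that $A$ is doubly stochastic from the two-sided stochasticity hypothesis; once that is in hand, either the Birkhoff extreme-point argument or the short combinatorial argument on $A^{-1}A=I$ closes it quickly. I would present the combinatorial route since it is self-contained and avoids citing the Birkhoff polytope. One subtlety to handle carefully is making sure the distinctness of the rows of $A$ (from invertibility) is correctly used --- that is what collapses the support of each row of $A^{-1}$ to a singleton.
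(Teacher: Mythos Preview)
The paper does not actually give its own proof of this lemma: it simply cites an external reference (``See~\cite{teaching_tip} for a proof of Lemma~\ref{lem:invertible}''). So there is no in-paper argument to compare against, and any correct proof you supply is an addition rather than a replacement.

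Your second, combinatorial route is correct and is the one to keep. From $A^{-1}A=I$ with both factors nonnegative you get $(A^{-1})_{ik}A_{kj}=0$ for $i\neq j$; then row-stochasticity of $A$ forces row $k$ of $A$ to be $e_i^\top$ whenever $(A^{-1})_{ik}>0$; invertibility of $A$ (distinct rows) collapses the support of each row of $A^{-1}$ to a singleton; and row-stochasticity of $A^{-1}$ makes that lone entry equal to $1$. That is a complete argument. Two small clean-ups: first, this route does \emph{not} use that $A$ is doubly stochastic at any point, so you should drop the phrase ``once $A$ is shown doubly stochastic'' and run the argument directly from the hypotheses. Second, your Birkhoff route has a real gap: you claim ``the $j$-th entry of $c^\top A^{-1}$ is a convex combination of the entries of $c$,'' but that entry is $\sum_i (A^{-1})_{ij}c_i$, weighted by the $j$-th \emph{column} of $A^{-1}$, whose sum you do not yet know is $1$ --- that is precisely the double-stochasticity you are trying to establish. (The fix is to look instead at $(A^{-1})^\top c=\mathds{1}$, where the weights are rows of $A^{-1}$ and hence genuine convex combinations; but even then one needs an additional step to pin down $c=\mathds{1}$.) Since the combinatorial argument is short, self-contained, and bypasses this entirely, I would present only that.
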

See~\cite{teaching_tip} for a proof of Lemma~\ref{lem:invertible}. As an easy corollary, we have the following lemma:
\begin{lem} \label{lem:perm_matrix}
A matrix $A$ is a stochastic matrix with all its eigenvalues on the unit circle if and only if $A$ is a permutation matrix. 
\end{lem}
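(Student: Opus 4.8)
The plan is to prove the two directions separately, with the hard work concentrated in the "only if" direction. The "if" direction is immediate: a permutation matrix is orthogonal, hence all its eigenvalues lie on the unit circle (its characteristic polynomial divides $z^k - 1$ for $k$ the order of the permutation, or simply: it is a real normal matrix whose singular values are all $1$). So the real content is showing that a stochastic matrix $A$ with $\sigma(A) \subseteq S^1$ must be a permutation matrix.

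For the "only if" direction, the approach is to show that $A$ is invertible with a stochastic inverse, and then invoke Lemma \ref{lem:invertible}. The key observation is that a stochastic matrix $A$ always satisfies $\rho(A) = 1$ with $1 \in \sigma(A)$ (since $A\mathds{1} = \mathds{1}$), and all of its eigenvalues lie in the closed unit disk. Under the hypothesis that every eigenvalue lies \emph{on} $S^1$, one wants to conclude that $A$ is similar to a unitary matrix, equivalently that $A$ has no nontrivial Jordan blocks. First I would argue that $A$ is diagonalizable: if $A$ had a Jordan block of size $\ge 2$ for an eigenvalue $\lambda$ with $|\lambda| = 1$, then the powers $\|A^k\|$ would grow linearly in $k$, contradicting the fact that $\|A^k\|_\infty = 1$ for all $k$ (every power of a stochastic matrix is stochastic, hence has $\infty$-norm $1$). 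Since $A$ is diagonalizable with all eigenvalues of modulus $1$, the sequence $A^k$ is bounded; moreover $A^{-1}$ exists (no zero eigenvalue) and $A^{-k} = (A^{-1})^k$ is also bounded since $A^{-1}$ is diagonalizable with eigenvalues $\lambda^{-1}$, again of modulus $1$.

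The crux is then to show $A^{-1}$ is itself stochastic, i.e., entrywise nonnegative (it automatically has row sums $1$, since $A\mathds{1} = \mathds{1}$ implies $A^{-1}\mathds{1} = \mathds{1}$). Here I would use a Cesàro / limit-point argument: since $\{A^k\}_{k \ge 0}$ is a bounded sequence of stochastic matrices, it has a convergent subsequence, and in fact one can extract a subsequence $A^{k_j} \to Q$ with $A^{k_j - 1} \to QA^{-1}$; since each $A^{k_j-1}$ is stochastic and the set of stochastic matrices is closed, $QA^{-1}$ is stochastic. The remaining point is to choose the subsequence so that $Q = I$, which is possible because the eigenvalues $\lambda_1, \dots, \lambda_n$ all lie on $S^1$: by simultaneous Dirichlet approximation (Weyl equidistribution / Kronecker's theorem applied in the closure of the subgroup generated by the $\lambda_i$), there are infinitely many $k$ with $\lambda_i^{k} \to 1$ for every $i$ simultaneously, hence $A^{k} \to I$ along that subsequence. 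Then $A^{k-1} = A^k A^{-1} \to A^{-1}$, so $A^{-1}$ is a limit of stochastic matrices and therefore stochastic. Lemma \ref{lem:invertible} now gives that $A$ is a permutation matrix.

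The main obstacle I anticipate is the simultaneous-approximation step: making precise that one can drive all eigenvalues to $1$ along a common subsequence of exponents. This is standard (it follows from Kronecker's theorem, or more elementarily from compactness of the torus and the pigeonhole principle applied to the tuple $(\lambda_1^k, \dots, \lambda_n^k)$), but it must be stated carefully. An alternative that sidesteps this entirely: once $A$ is known to be diagonalizable with spectrum on $S^1$, write $A = SDS^{-1}$ with $D$ unitary diagonal; then $A$ is invertible and $\det A = \prod \lambda_i$ has modulus $1$, while $\det A = \sum_\sigma \pm \prod a_{i\sigma(i)}$ is a polynomial with the row-sum-$1$, nonnegativity constraints — a convexity argument then forces $A$ to be a vertex of the Birkhoff polytope, i.e., a permutation matrix. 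I would present the subsequence argument as the primary route since it is the cleanest, and remark on the Birkhoff-polytope alternative.
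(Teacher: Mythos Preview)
Your proof is correct and follows exactly the route the paper intends: reduce to Lemma~\ref{lem:invertible} by exhibiting $A^{-1}$ as a stochastic matrix. The paper itself provides no argument beyond declaring the lemma an ``easy corollary'' of Lemma~\ref{lem:invertible}, so your detailed justification---bounded powers force diagonalizability, and simultaneous Dirichlet approximation gives $A^{k_j}\to I$, whence $A^{k_j-1}\to A^{-1}$ is a limit of stochastic matrices---supplies precisely the missing steps.
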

Using Lemma~\ref{lem:invertible}, we now prove Proposition~\ref{prop:no_similiar_to_perm}:
\begin{proof}[Proof of Proposition~\ref{prop:no_similiar_to_perm}.] Let $A=DPD^{-1}$ where $P \in S_n$, and $D \in \mathbb{GL}(n)$. We have that
$
A^k = D P^k D^{-1}.
$
Since $P$ is a permutation matrix, there exists a $k\geq1$ so that $P^k = I$. Putting the previous two equalities together, we get
$$
A^k = D I D^{-1} = D D^{-1} = I. 
$$
Hence, we have $A^{-1}=A^{k-1}$. Furthermore, by Lemma~\ref{lem:invertible}, the matrices $A^{-1}$ and $A$ are permutation matrices.     
\end{proof}
 { In this subsection, by Proposition~\ref{prop:no_similiar_to_perm}, we establish that a matrix conjugate to a permutation matrix is itself a permutation matrix. Thus, the principal block submatrix $B^{00}_C$ of $P_C$ not only exhibits a conjugacy to a permutation block but also directly represents a permutation structure derived from $B$.}

\subsection{Non-Permutation Blocks}

\begin{prop}\label{prop:block_irreducible_for_exhaustive}  
For an exhaustive walk $\gamma$ in the derived graph ${D}_G(w)$, there exists a permutation matrix $P$ such that 
\begin{equation}\label{lem:block_for_exhaustive}
P_{\psi(\gamma)} \sim_P \begin{blockarray}{ccccc}
\pi^{\psi(\gamma)}_0     & \pi^{\psi(\gamma)}_1 & \cdots  & \pi^{\psi(\gamma)}_m    &  \\
\begin{block}{[cccc]c}
B^{00}_{\psi(\gamma)}    &          &        & \textbf{0} & \pi^{\psi(\gamma)}_0 \\
            & B^{11}_{\psi(\gamma)} &        &            & \pi^{\psi(\gamma)}_1 \\
            &          & \ddots &            & \vdots \\
\textbf{0}  &          &        & B^{mm}_{\psi(\gamma)}   & \pi^{\psi(\gamma)}_m  \\
\end{block}
\end{blockarray}%
\end{equation} where $B_{\psi(\gamma)}^{ii}$ are irreducible matrices for $i \geq 1$, the rows/columns of which are indexed by the set $\pi^{\psi(\gamma)}_i$ for $i \geq 1$ and the rows/columns of the permutation matrix $B_{\psi(\gamma)}^{00}$ are indexed by the set $\cap_{C \in \psi(\gamma)} \pi^C_0$.  
\end{prop}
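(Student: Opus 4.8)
The plan is to build the decomposition from two ingredients already in hand: the single-cycle decomposition \eqref{eqn:factorization_Pc} and the fact that an exhaustive closed walk $\gamma$ visits (a sequence of) cycles whose union covers all $nm$ coordinates. First I would recall that $P_{\psi(\gamma)}$ is, by definition of $\psi$ and of the transition matrices, a product $P_{C_{i_s}}\cdots P_{C_{i_1}}$ of the cycle-transition matrices encountered along $\gamma$ (in the order-reversed convention fixed after Definition~\ref{def:derived_graph_w}). For each such cycle $C$, Lemma~\ref{lem:irreducible_blocks} and the ensuing discussion give $\bar P_C \sim_P \mathrm{diag}(B^{00}_C, B^{11}_C,\dots)$ with $B^{00}_C$ a permutation matrix on the index set $\pi^C_0 = \pi_{P_C}$; on the complement, $P_C$ restricted to nodes not covered by $C$ is the identity, so in the \emph{ambient} $nm\times nm$ matrix $P_C$ the "permutation part" is really the block on $\pi^C_0$ together with all uncovered coordinates, and the genuinely contracting part lives on $\bigcup_{j\ge1}\pi^C_j$.

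Next I would track the common permutation index $\pi_0 := \cap_{C\in\psi(\gamma)}\pi^C_0$. On coordinates in $\pi_0$, \emph{every} factor $P_{C}$ appearing in the product acts as a permutation (that is what membership in every $\pi^C_0$ buys us, once one checks that the off-diagonal blocks $B^{0j}_C$ vanish — this is exactly the content of \eqref{eqn:factorization_Pc} after the "isolate the permutation part" relabeling, so the block on $\pi_0$ is decoupled from the rest). A finite product of permutation matrices is a permutation matrix, hence the restriction of $P_{\psi(\gamma)}$ to $\pi_0$ is a permutation matrix $B^{00}_{\psi(\gamma)}$; this is the claimed permutation block, and by Proposition~\ref{prop:no_similiar_to_perm} it is a genuine permutation matrix and not merely conjugate to one. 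On the complementary index set $\pi_0^c$, $P_{\psi(\gamma)}$ is a nonnegative matrix; I would then invoke Lemma~\ref{lem:irreducible_blocks} / the canonical form \eqref{eqn:canonical_form} one more time, applied now to $\gamma$ regarded as a cycle in $\vv G$ (an exhaustive closed walk is in particular a closed walk, so the order argument applies), to conclude that the restriction to $\pi_0^c$ is permutationally similar to a block-diagonal matrix with irreducible diagonal blocks $B^{ii}_{\psi(\gamma)}$, $i\ge1$. Assembling the two pieces through a single permutation matrix $P$ gives \eqref{lem:block_for_exhaustive}, with the row/column index sets of the $B^{ii}_{\psi(\gamma)}$ forming a partition of $\pi_0^c$ and $B^{00}_{\psi(\gamma)}$ indexed by $\pi_0$.

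The step I expect to be the main obstacle is showing that the off-diagonal coupling blocks genuinely vanish — i.e., that the coordinates in $\pi_0$ really do decouple from those in $\pi_0^c$ under the product, not just under each individual factor. For a single cycle this is \eqref{eqn:factorization_Pc}, but one must argue that \emph{simultaneously} for all cycles along $\gamma$ the permutation parts can be carved out by \emph{one} common relabeling; the point is that a coordinate $i\in\pi_0$ is, for every factor $P_C$, either permuted among $\pi^C_0$ or left fixed (when $i$ is not covered by $C$), and in both cases its row has a single $1$ and no mass leaks into $\pi_0^c$, while no mass enters $\pi_0$ either because column $i$ likewise has a single $1$. Making this bookkeeping precise — that "single $1$ in row and column, supported inside $\pi_0$" is preserved under composition — is the crux; once it is established, the permutation-block claim is immediate and the irreducibility of the remaining diagonal blocks is a direct application of the canonical form for reducible nonnegative matrices together with the nonzero-order hypothesis (which, as in Lemma~\ref{lem:irreducible_blocks}, forbids convergent diagonal blocks).
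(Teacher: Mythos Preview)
Your shortcut for the non-permutation blocks---applying the argument of Lemma~\ref{lem:irreducible_blocks} directly to $P_{\psi(\gamma)}$---is legitimate and in fact cleaner than the paper's graph-composition route (Lemmas~\ref{lem:primitive_block}--\ref{lem:self_arc}): since $\gamma$ is a \emph{closed} walk in $D_G(w)$ one has $w = w P_{\psi(\gamma)}$ with $w>0$, so the transient-class exclusion argument transfers verbatim and yields a block-diagonal form with irreducible blocks. What you lose relative to the paper is the explicit description of the partition $\pi^{\psi(\gamma)}$ as unions of the $\pi^C_j$'s, but for the proposition as stated that is not needed.

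The real gap is in your decoupling step for the permutation block. You claim that for $i\in\pi_0=\cap_C\pi^C_0$, each factor $P_C$ has the single $1$ of row $i$ located inside $\pi_0$, so ``no mass leaks into $\pi_0^c$''. This is not justified: the permutation $B^{00}_C$ acts on $\pi^C_0$, not on the smaller set $\pi_0$, and may send $i$ to some $j\in\pi^C_0\setminus\pi_0$. Under the next factor $P_{C'}$, the index $j$ need not lie in $\pi^{C'}_0$, so row $j$ of $P_{C'}$ is generally not a unit vector, and the permutation structure of row $i$ in the product collapses. Your ``single $1$ in row and column, supported inside $\pi_0$'' bookkeeping therefore fails already at the second factor.

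The paper repairs exactly this point with two ingredients you do not invoke. First, because cycles in $D_G(w)$ have no chords, an exhaustive closed walk $\gamma$ decomposes into complete cycles of $D_G(w)$, so $P_{\psi(\gamma)}$ is a product of \emph{full-orbit powers} $P_C^{w}:=(P_C)^{\operatorname{ord}_w C}$ rather than of bare $P_C$'s. Second---and this is the crux---$(B^{00}_C)^{\operatorname{ord}_w C}$ is the identity (immediately when the entries of $w$ are distinct, since a permutation fixing a vector with distinct entries must be trivial; the equal-entries case needs a separate argument the paper also supplies). Hence each $P_C^{w}$ fixes every coordinate of $\pi^C_0\supseteq\pi_0$ individually, so $\pi_0$ \emph{is} invariant under each factor $P_C^{w}$, and only then does your row/column bookkeeping go through. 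Without passing from $P_C$ to $P_C^{w}$ and establishing that the permutation block becomes the identity, there is no mechanism forcing the off-diagonal coupling between $\pi_0$ and $\pi_0^c$ to vanish. You should also note that the converse inclusion (the permutation block of $P_{\psi(\gamma)}$ is no larger than $\cap_C\pi^C_0$) is handled in the paper via Lemma~\ref{lem:two_factor_perm}; your proposal does not address this direction.
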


To prove Proposition~\ref{prop:block_irreducible_for_exhaustive}, we introduce the following lemmas and definitions. {We start with a lemma to study the properties of the permutation block matrix {$B^{00}_{\psi(\gamma)}$}:}

\begin{lem} \label{lem:two_factor_perm}
If $C = A B$ where $C \in S_n$ and $A$,$B$ are stochastic matrices order $n$, then $B \in S_n$ and $A \in S_n$
\end{lem}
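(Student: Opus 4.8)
\textbf{Proof proposal for Lemma~\ref{lem:two_factor_perm}.}
The plan is to reduce everything to Lemma~\ref{lem:invertible}, which characterizes permutation matrices as exactly those stochastic matrices whose inverse is also stochastic. The first step is to record that $C \in S_n$ is invertible with $C^{-1} = C^\top \in S_n$; in particular $C^{-1}$ is itself a row-stochastic matrix. Since $C = AB$ and $\det C = \det A \cdot \det B \neq 0$, both $A$ and $B$ are invertible. This is the only place where one has to be slightly careful: invertibility of the factors is deduced from invertibility of the product, not assumed.

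The key computation is then the identity $B^{-1} = C^{-1}A$, which follows from $C = AB \Rightarrow C^{-1} = B^{-1}A^{-1} \Rightarrow C^{-1}A = B^{-1}$ (the order of multiplication matters here because the matrices need not commute). Now $C^{-1}A$ is a product of two row-stochastic matrices — $C^{-1}$ because it is a permutation matrix, $A$ by hypothesis — and a product of row-stochastic matrices is row-stochastic (nonnegativity is preserved and $(C^{-1}A)\mathds{1} = C^{-1}\mathds{1} = \mathds{1}$). Hence $B^{-1}$ is stochastic, and since $B$ is stochastic as well, Lemma~\ref{lem:invertible} gives $B \in S_n$.

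Finally, $A = CB^{-1}$ is a product of the permutation matrices $C$ and $B^{-1}$, hence $A \in S_n$, completing the proof. I do not anticipate any real obstacle here: the argument is a short chain of elementary manipulations, and the only points demanding attention are (i) extracting invertibility of $A$ and $B$ from that of $C$, and (ii) keeping track of the left/right order of the factors when inverting, so that Lemma~\ref{lem:invertible} is applied to the correct matrix.
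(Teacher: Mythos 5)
Your proof is correct, but it takes a genuinely different route from the paper's. The paper argues via determinants and spectra: since $|\det(C)|=1$ and $\det(C)=\det(A)\det(B)$ with $|\det(A)|,|\det(B)|\leq 1$ (each being the product of eigenvalue moduli of a stochastic matrix, all of which lie in the closed unit disk), it concludes $|\det(A)|=|\det(B)|=1$, hence every eigenvalue of $A$ and of $B$ lies on the unit circle, and then invokes Lemma~\ref{lem:perm_matrix}. You instead stay entirely at the level of inverses: from $C=AB$ and $\det(C)\neq 0$ you extract invertibility of the factors, observe that $B^{-1}=C^{-1}A$ is a product of stochastic matrices and hence stochastic, and apply Lemma~\ref{lem:invertible} to get $B\in S_n$, after which $A=CB^{-1}$ is a product of permutations. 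Your key identity and the order of factors are handled correctly, and the claim that products of row-stochastic matrices are row-stochastic is justified. What each approach buys: the paper's version treats $A$ and $B$ symmetrically in one stroke but leans on Lemma~\ref{lem:perm_matrix}, which is stated in the paper only as an unproved ``easy corollary''; your version reduces directly to Lemma~\ref{lem:invertible}, the one result for which the paper actually cites a proof, so it is arguably the more self-contained of the two. No gap either way.
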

\vspace{-5mm}
\begin{proof}
We have $|\det(C)|=1$ since it is a permutation matrix. Thanks to the homomorphism of the determinant, we have the following:
\begin{align*} \\[-1.7em]
\det(C)&=\det(A)\det(B) \\
1&=|\det(A)||\det(B)|.
\end{align*}
This shows that $|\det(A)|$ and $|\det(B)|$ are $1$. It implies that $\sigma(A)$ and $\sigma(B)$ lie on the unit circle. Lemma~\ref{lem:perm_matrix} then implies that $A\in S_n$ and $B \in S_n$. 
\end{proof}

We need the following lemma to study properties of irreducible block matrices in the submatrix $M_{\psi(\gamma)}$.

\begin{lem}\label{lem:primitive_block}
For a nonzero $\operatorname{ord}_w C$, there exists a permutation matrix $P$ such that the matrix $(M_C)^{\operatorname{ord}_w C}\sim_PB$ {(see~\eqref{eqn:factorization_Pc})} where $B$ is a block diagonal matrix with blocks that are primitive.
\end{lem}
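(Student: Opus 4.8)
The plan is to reduce everything to the block structure of $M_C$ already exhibited in~\eqref{eqn:factorization_Pc}. There $M_C=\bigoplus_{j=1}^{m}B^{jj}_C$, where each $B^{jj}_C$ is an irreducible row-stochastic matrix that is \emph{not} a permutation matrix (the genuine permutation pieces having been collected into $B^{00}_C$). Since block-diagonality is preserved under taking powers, $(M_C)^{k}=\bigoplus_{j=1}^{m}(B^{jj}_C)^{k}$ for every $k\ge 1$, and in particular for $k=\operatorname{ord}_wC$. Thus it is enough to conjugate each $(B^{jj}_C)^{k}$ to a block-diagonal matrix with primitive diagonal blocks by a permutation $P_j$; the permutation $P$ claimed in the statement is then the direct sum of the $P_j$ composed with the relabeling of~\eqref{eqn:factorization_Pc}.

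Fix $j$. If $B^{jj}_C$ is primitive, then so is every $(B^{jj}_C)^{k}$ with $k\ge 1$ — it is again irreducible and has a single eigenvalue on its spectral circle — so $(B^{jj}_C)^{k}$ is a single primitive block and $P_j=I$ works. If $B^{jj}_C$ is imprimitive, let $h_j>1$ be its index of imprimitivity, which coincides with its period $\mathcal{\omega}^{B^{jj}_C}$. By Lemma~\ref{lem:fro_form} there is a permutation $Q_j$ with $B^{jj}_C\sim_{Q_j}F_j$, $F_j$ in Frobenius ($h_j$-cyclic) form; as recorded after Lemma~\ref{lem:fro_form}, $(F_j)^{h_j}$ is block-diagonal with primitive blocks, hence so is $(F_j)^{th_j}=\big((F_j)^{h_j}\big)^{t}$ for every $t\ge 1$. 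So once we know $h_j\mid\operatorname{ord}_wC$, conjugating $(B^{jj}_C)^{k}$ by $Q_j$ settles block $j$, and the lemma follows.

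This reduces the whole statement to the divisibility claim: for each imprimitive diagonal block $B^{jj}_C$ of $M_C$, the index of imprimitivity $h_j$ divides $\operatorname{ord}_wC$. To attack it, I would use the defining equation $\bar w=\bar w(\bar P_C)^{k}$ with $k=\operatorname{ord}_wC$ together with $w^{\top}\in\inte\Delta^{nm-1}$: writing $\bar w$ in the block coordinates of~\eqref{eqn:factorization_Pc} as $(w^{(0)},w^{(1)},\dots,w^{(m)})$, the equation decouples into $w^{(j)}=w^{(j)}(B^{jj}_C)^{k}$ with $w^{(j)}>0$; passing to Frobenius coordinates, $\tilde w:=w^{(j)}Q_j>0$ must be fixed by $(F_j)^{k}$, whose left action cyclically shifts the $h_j$ pieces of $\tilde w$ by $k\bmod h_j$. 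One then wants to conclude $h_j\mid k$, invoking the minimality of $\operatorname{ord}_wC$ over the entire vector $\bar w$ (which couples the cyclic behaviours of the different blocks) and the fact that no permutation piece was left inside $M_C$.

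I expect this last step to be the real obstacle. Taken in isolation, the single-block equation $w^{(j)}=w^{(j)}(B^{jj}_C)^{k}$ is consistent with $h_j\nmid k$ — for instance when $w^{(j)}$ is the left Perron vector of $B^{jj}_C$, in which case it is fixed by every power — so the divisibility cannot come from that block alone. It has to be extracted from the interplay between $\operatorname{ord}_wC$ being \emph{minimal}, the global $w$-holonomy hypothesis, and the placement of all true permutation blocks in $B^{00}_C$; making that interplay precise (equivalently, characterizing which weight vectors $w$ actually yield a nonzero, finite $\operatorname{ord}_wC$) is where the content of the lemma sits. Everything else is bookkeeping with Frobenius forms and direct sums.
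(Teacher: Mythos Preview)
Your reduction is exactly the one the paper uses: split $\bar P_C$ into the permutation part $B^{00}_C$ and $M_C=\bigoplus_j B^{jj}_C$, pass to the Frobenius form of each imprimitive block, and reduce the whole statement to the divisibility claim $h_j\mid \operatorname{ord}_wC$. The paper's proof is only two sentences long; after writing down the decoupled equation $w_M=w_M(M_C)^{\operatorname{ord}_wC}$ it simply asserts that ``from the definition of Perron vector and $(*)$, $\operatorname{ord}_wC$ is a multiple of the index of imprimitivity of all block matrices $B^{jj}_C$ for $j\ge 1$,'' and stops.

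So the step you flag as ``the real obstacle'' is precisely the step the paper does \emph{not} argue in any more detail than you do. Your worry is legitimate: the single-block identity $w^{(j)}=w^{(j)}(B^{jj}_C)^{k}$ with $w^{(j)}>0$ does not by itself force $h_j\mid k$ (your Perron-vector example shows this, and more generally one can take $w^{(j)}$ to be a small positive perturbation of the Perron vector along the left $(-1)$-eigenvector of a period-$4$ block to get $k=2$, $h_j=4$). The paper's one-line appeal to the Perron vector does not address this, and neither minimality of $\operatorname{ord}_wC$ nor the separation of permutation pieces into $B^{00}_C$ closes the gap in any obvious way. In short: your approach matches the paper's, you have correctly located where the content lies, and the paper's proof is at least as incomplete at that point as your own.
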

\begin{proof}
From Lemma~\ref{lem:irreducible_blocks} and Definition~\ref{def:non-trivial}, we know that the following holds:
\begin{align*}\label{eqn:diagonal_factorization}
 \underbrace{[ w_p , w_M ]}_{\bar{w}} (\bar{P}_C)^{\operatorname{ord}_w C} &= [ w_p , w_M ] \left[\begin{smallmatrix}
B^{00}_C  & 0 \\
0 &  M_C
 \end{smallmatrix}\right]^{\operatorname{ord}_wC} \\
&= [w_p (B^{00}_C)^{\operatorname{ord}_wC}, \underbrace{w_M (M_C)^{\operatorname{ord}_wC}}_{*} ]\\ 
&=  \underbrace{[ w_p , w_M ]}_{\bar{w}}
\end{align*}
up to relabeling. From the definition of Perron vector and $(*)$, we know that $\operatorname{ord}_wC$ is a multiple of the index of imprimivity of all block matrices $B^{jj}_C$ for $j \geq 1$, from which the result follows. 
\end{proof}
For convenience, we denote the matrix $(M_C)^{\operatorname{ord}_w C}$ by $(M_C)^{w}$.
To prove Proposition~\ref{prop:block_irreducible_for_exhaustive}, we need to consider the graph of a matrix. Let $A$ be a matrix. Let $\mathbb{G}_A$ be a directed graph such that the transpose of its adjacency matrix is equal to the matrix obtained by replacing non-zero entries of the matrix $A$ by one. The directed graph $\mathbb{G}_A$ is called {\em the graph of $A$}.

{The {\em period of the $i^{th}$ entry} of a nonnegative matrix $A$ is defined as $\mathcal{\omega}_A(i):=gcd\{ m: [A^m]_{ii} > 0 , m \in \Nset\}$. If $A$ is irreducible, then $\mathcal{\omega}_A(i) = \mathcal{\omega}_A(j), \forall i,j$~\cite{kitchens1997symbolic}. This common value is called the {\em period} of the matrix $A$, denoted by $\mathcal{\omega}^A$.}

For an irreducible matrix, the period of the matrix is equal to the index of imprimivity of the matrix~\cite{horn2012matrix}. Then, one can easily prove the following corollary to Lemma~\ref{lem:primitive_block}:

\begin{lem}\label{lem:union_of_SCC}
The graph of the matrix $(M_C)^{w}$ is the union of the graphs of the submatrices $(B^{jj}_C)^{w}$, each of which is strongly connected with self-arc at every node for $j \geq 1$. 
\end{lem}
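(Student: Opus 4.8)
The plan is to combine the block-diagonal structure established in Lemma~\ref{lem:primitive_block} with the correspondence between powers of nonnegative matrices and walks in their associated digraphs, together with the graph-theoretic characterization of periodicity. First I would unwind the notation: by Lemma~\ref{lem:irreducible_blocks} and~\eqref{eqn:factorization_Pc}, after relabeling, $M_C$ is block diagonal with irreducible blocks $B^{jj}_C$, $j\ge 1$, so that $(M_C)^{w} = (M_C)^{\operatorname{ord}_w C}$ is also block diagonal, with diagonal blocks $(B^{jj}_C)^{\operatorname{ord}_w C} = (B^{jj}_C)^w$. Since taking a matrix power preserves the block-diagonal partition, the graph $\mathbb{G}_{(M_C)^w}$ is the disjoint union of the graphs $\mathbb{G}_{(B^{jj}_C)^w}$; the cross blocks are zero, so no edges run between distinct blocks. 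This gives the "union of graphs of the submatrices" part directly.

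Next I would establish that each $\mathbb{G}_{(B^{jj}_C)^w}$ is strongly connected with a self-arc at every node. For the self-arcs: by Lemma~\ref{lem:primitive_block}, $(M_C)^w$ is permutationally similar to a block-diagonal matrix with \emph{primitive} blocks; since primitivity is preserved under the relabeling and the blocks of that matrix are (up to permutation) exactly the $(B^{jj}_C)^w$, each $(B^{jj}_C)^w$ is a primitive nonnegative irreducible matrix. For a primitive matrix $Q$ there is $N$ with $Q^N > 0$ entrywise; in particular $[Q^N]_{ii}>0$ for every $i$, and also $[Q^{N+1}]_{ii}>0$, so $\gcd\{m : [Q^m]_{ii}>0\}=1$, i.e.\ $\omega_Q(i)=1$. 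I would then invoke the key fact recalled just before the statement: for an irreducible matrix the period equals the index of imprimitivity, which for a primitive matrix is $1$; hence by the characterization of the period as $\gcd\{m:[Q^m]_{ii}>0\}$ we get that $[Q]_{ii}>0$ is \emph{not} automatic, but $1$ being in the set $\{m:[Q^m]_{ii}>0\}$ forces $[Q]_{ii}>0$ — more carefully, since $\gcd$ of the set is $1$ and the set is closed under addition, $1$ itself lies in the set, so $[Q]_{ii}>0$, which is precisely the statement that $\mathbb{G}_Q$ has a self-arc at node $i$. Strong connectedness of $\mathbb{G}_Q$ is immediate from irreducibility of $Q$ (equivalently, $Q^N>0$ for primitive $Q$ gives a walk of length $N$ between any ordered pair of nodes).

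The main obstacle I anticipate is the careful bookkeeping of which "blocks" are being referred to at each stage — the statement of Lemma~\ref{lem:primitive_block} says $(M_C)^w$ is permutationally similar to a block-diagonal matrix with primitive blocks, and one must argue that this internal permutation does not mix the already-isolated irreducible blocks $B^{jj}_C$ (it can only permute rows/columns within each, and possibly reorder the blocks), so that the primitive blocks of that canonical form coincide with the $(B^{jj}_C)^w$ up to relabeling. Once this identification is pinned down, everything else is a routine translation between the entrywise-positivity statements ($[Q^m]_{ii}>0$, $Q^N>0$) and the graph-theoretic statements (self-arc at every node, strong connectedness), using only the definition of $\mathbb{G}_A$ and the cited equivalence of period and index of imprimitivity for irreducible matrices.
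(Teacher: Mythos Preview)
Your argument for the self-arc property has a genuine error. You assert that because the return set $R_i=\{m:[Q^m]_{ii}>0\}$ is closed under addition and has $\gcd$ equal to $1$, the integer $1$ must lie in $R_i$. This implication is false: additive sub-semigroups of $\mathbb{N}$ with $\gcd$ one need not contain $1$. A concrete stochastic counter-example is
\[
Q=\begin{pmatrix}0&1&0\\0&0&1\\\tfrac12&\tfrac12&0\end{pmatrix},
\]
which is irreducible and primitive (its digraph has closed walks of lengths $2$ and $3$, so the period is $1$), yet every diagonal entry is zero; here $R_1=\{3,5,6,7,\ldots\}$. Thus primitivity of a block cannot by itself force $[Q]_{ii}>0$, and your deduction of the self-arcs collapses precisely at the step ``$1$ itself lies in the set, so $[Q]_{ii}>0$''. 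The fact you invoke from the paper, that period equals index of imprimitivity, only gives $\omega_Q(i)=1$; it does not place $1$ in the return set.

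There is a second, related gap in your identification step. You claim that the primitive blocks produced by Lemma~\ref{lem:primitive_block} coincide, up to relabeling, with the matrices $(B^{jj}_C)^w$. But the proof of Lemma~\ref{lem:primitive_block} only establishes that $\operatorname{ord}_w C$ is a multiple of the index of imprimitivity $h_j$ of each $B^{jj}_C$; when $h_j>1$, the Frobenius form (Lemma~\ref{lem:fro_form}) shows that $(B^{jj}_C)^{\operatorname{ord}_w C}$ is itself \emph{reducible}, decomposing into $h_j$ primitive sub-blocks. Hence the permutation $P$ in Lemma~\ref{lem:primitive_block} may strictly refine the partition $\{\pi^C_j\}_{j\ge 1}$ rather than merely reorder it, and your conclusion that each $(B^{jj}_C)^w$ is primitive---indeed, even irreducible, which is what ``strongly connected'' demands---is unjustified without additional argument ruling out $h_j>1$.
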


We need to introduce the composition of the graph of matrices. Let $\mathbb{G}_A$ and $\mathbb{G}_B$ be two directed graphs with the same node set $V$. The composition of $\mathbb{G}_A$ with $\mathbb{G}_B$, denoted by $\mathbb{G}_B \circ \mathbb{G}_A$, is a digraph with the node set $V$ and {the edge set} defined as follows: $v_iv_j$ is an edge of $\mathbb{G}_B \circ \mathbb{G}_A$ whenever there is a node $v_k$ such that
$v_iv_k$ is an edge of $\mathbb{G}_A$ and $v_kv_j$ is an edge of $\mathbb{G}_B$~\cite{chen2022gossip}. We have the following Lemma based on the composition definition: 

\begin{lem}\label{lem:composition}
For any sequence of stochastic matrices $A_1,A_2,\cdots,A_k$ which are all of the same size, we have that $\mathbb{G}_{A_k\cdots A_2A_1} = \mathbb{G}_{A_k} \circ \cdots \circ \mathbb{G}_{A_2} \circ \mathbb{G}_{A_1}$
\end{lem}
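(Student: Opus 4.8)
The plan is to prove the statement by induction on $k$, the number of matrices in the product, with the crux of the argument being the two-matrix case $k=2$. For the base case $k=1$ the identity $\mathbb{G}_{A_1}=\mathbb{G}_{A_1}$ is a tautology, so the real content is the inductive step, which will follow immediately from associativity of graph composition once the case $k=2$ is established; that is, writing $A_k\cdots A_1 = A_k\cdot(A_{k-1}\cdots A_1)$ and applying the two-matrix identity together with the induction hypothesis gives $\mathbb{G}_{A_k\cdots A_1}=\mathbb{G}_{A_k}\circ \mathbb{G}_{A_{k-1}\cdots A_1}=\mathbb{G}_{A_k}\circ(\mathbb{G}_{A_{k-1}}\circ\cdots\circ\mathbb{G}_{A_1})$.

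For the two-matrix case, I would argue directly from the definitions. Recall that $\mathbb{G}_A$ is the digraph whose transposed adjacency matrix is the zero-one pattern of $A$; concretely, $v_i v_j$ is an edge of $\mathbb{G}_A$ exactly when $[A]_{ji}\neq 0$. Now fix two stochastic matrices $B$ and $A$ of the same size and consider the product $BA$. We have $[BA]_{ji}=\sum_{k}[B]_{jk}[A]_{ki}$. Since $B$ and $A$ are nonnegative (being stochastic), there is no cancellation in this sum: $[BA]_{ji}\neq 0$ if and only if there exists an index $k$ with $[B]_{jk}\neq 0$ and $[A]_{ki}\neq 0$. Translating through the edge convention, $[A]_{ki}\neq 0$ means $v_i v_k$ is an edge of $\mathbb{G}_A$, and $[B]_{jk}\neq 0$ means $v_k v_j$ is an edge of $\mathbb{G}_B$; thus $[BA]_{ji}\neq 0$ iff there is a node $v_k$ with $v_iv_k\in\mathbb{G}_A$ and $v_kv_j\in\mathbb{G}_B$, which is precisely the defining condition for $v_iv_j$ to be an edge of $\mathbb{G}_B\circ\mathbb{G}_A$. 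Hence $\mathbb{G}_{BA}=\mathbb{G}_B\circ\mathbb{G}_A$, completing the base case.

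The only subtlety — and the one place where the hypotheses genuinely matter — is the absence of cancellation: nonnegativity of the entries is what makes $[BA]_{ji}=0$ equivalent to \emph{every} term $[B]_{jk}[A]_{ki}$ vanishing, rather than merely their sum vanishing. This is guaranteed here because all matrices in question are stochastic, hence have nonnegative entries, and products of nonnegative matrices are nonnegative. I would state this observation explicitly as the key step; everything else is bookkeeping with indices and the associativity of composition (which one may either take as standard or verify in one line from the definition). I do not anticipate any real obstacle beyond being careful about the transpose in the edge convention and the order-reversal it induces (the product $A_k\cdots A_1$ corresponds to composing in the order $\mathbb{G}_{A_1}$ first, then $\mathbb{G}_{A_2}$, and so on), which matches the way $\circ$ was defined above.
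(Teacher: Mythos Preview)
Your argument is correct: the two-matrix case via nonnegativity (no cancellation in $\sum_k [B]_{jk}[A]_{ki}$) together with induction is exactly the standard proof, and you have handled the transpose convention and the order of composition carefully. The paper does not give its own proof of this lemma at all; it simply cites \cite[Lem.~5]{Morse_etAl2008Dynamically}, so your proposal supplies precisely the argument that the reference would contain.
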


See~\cite[Lem.~5]{Morse_etAl2008Dynamically} for a proof of Lemma~\ref{lem:composition}. One can easily prove the following lemma:  

\begin{lem}\label{lem:self_arc}
If the graphs $\mathbb{G}_A$ and $\mathbb{G}_B$ have self-arcs at every node, then the union of the {edge sets} of $\mathbb{G}_A$ and $\mathbb{G}_B$ is a subset of the edge set of the graph $\mathbb{G}_{AB}$
\end{lem}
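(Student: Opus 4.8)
The plan is to read the inclusion off directly from the graph-composition identity of Lemma~\ref{lem:composition} together with the defining property of the composition of digraphs recalled just above it. First I would apply Lemma~\ref{lem:composition} with $k=2$, writing $AB = A_2A_1$ with $A_2 = A$ and $A_1 = B$, which gives $\mathbb{G}_{AB} = \mathbb{G}_A \circ \mathbb{G}_B$. By the definition of composition, an arc $v_iv_j$ belongs to $\mathbb{G}_A \circ \mathbb{G}_B$ as soon as there is some node $v_k$ with $v_iv_k$ an arc of $\mathbb{G}_B$ and $v_kv_j$ an arc of $\mathbb{G}_A$; the self-arcs are exactly what let me supply the ``missing'' intermediate step in each direction.

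I would then verify the two inclusions separately. If $v_iv_j$ is an arc of $\mathbb{G}_A$, take $v_k = v_i$: since $\mathbb{G}_B$ has a self-arc at every node, $v_iv_i$ is an arc of $\mathbb{G}_B$, while $v_iv_j$ is an arc of $\mathbb{G}_A$ by hypothesis, so $v_iv_j$ is an arc of $\mathbb{G}_A \circ \mathbb{G}_B = \mathbb{G}_{AB}$. Symmetrically, if $v_iv_j$ is an arc of $\mathbb{G}_B$, take $v_k = v_j$: then $v_iv_j$ is an arc of $\mathbb{G}_B$ and $v_jv_j$ is an arc of $\mathbb{G}_A$ by the self-arc hypothesis, so again $v_iv_j$ is an arc of $\mathbb{G}_{AB}$. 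Taking the union over the two cases yields that the union of the arc sets of $\mathbb{G}_A$ and $\mathbb{G}_B$ is contained in the arc set of $\mathbb{G}_{AB}$.

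There is essentially no real obstacle here beyond bookkeeping: the only delicate point is keeping the order/direction convention consistent, i.e. which factor is ``read first'' in $\mathbb{G}_A \circ \mathbb{G}_B$ and hence whether the auxiliary node $v_k$ should be chosen equal to the tail $v_i$ or the head $v_j$ of the given arc; aligning this with the normalization in Lemma~\ref{lem:composition} and with the transpose convention in the definition of $\mathbb{G}_A$ is where a slip could occur. As a cross-check one can also argue at the level of supports, using that self-arcs at every node mean $A$ and $B$ have all diagonal entries nonzero: picking out the $k=i$ (resp. $k=j$) term in $(AB)_{ji} = \sum_k A_{jk}B_{ki}$ shows $supp(A) \cup supp(B) \subseteq supp(AB)$, which is the same statement. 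I would keep the graph-theoretic version as the primary write-up, since it matches the surrounding development.
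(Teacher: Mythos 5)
Your proof is correct and is exactly the argument the paper intends: the paper itself omits the proof (it only remarks that ``one can easily prove'' the lemma), and your route---applying Lemma~\ref{lem:composition} with two factors and using the self-arcs of $\mathbb{G}_B$ at the tail (resp.\ of $\mathbb{G}_A$ at the head) to supply the intermediate node---is the natural one, with the order convention handled correctly. The support-level cross-check is also fine, the implicit point being that nonnegativity of the stochastic matrices rules out cancellation in $\sum_k A_{jk}B_{ki}$.
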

{We note that} the condition of Lemma~\ref{lem:self_arc} says that $A$ and $B$ have all non-zero diagonal entries.

Using the preceding lemmas, we prove Proposition~\ref{prop:block_irreducible_for_exhaustive}:
\begin{proof}[Proof of Proposition~\ref{prop:block_irreducible_for_exhaustive}.] 
{The second condition in the definition of $w$-holonomy for $G$ ensures that the orbit sets of distinct cycles intersect trivially. 
It implies that an exhaustive walk $\gamma$ in $D_G(w)$ is a concatenation of  cycles and self-loops in $D_G(w)$; said differently, $\psi(\gamma)$ is a concatenation of cycles in $G$, where each cycle is repeated a number of times equal to its $w$-order.

Hence, to prove the Proposition, it suffices  to consider  $\gamma:=e_{C_a}^{1}e_{C_a}^{2}\cdots e_{C_a}^{k_a-1}e_{C_b}^{1}\cdots e_{C_b}^{k_b} \cdots e_{C_a}^{k_a-1}$, for which  
$$
\psi(\gamma)=\underbrace{C_a \cdots C_a}_{\operatorname{ord}_w C_a} \cdots \underbrace{C_b \cdots C_b}_{\operatorname{ord}_w C_b} \underbrace{C_a \cdots C_a}_{\operatorname{ord}_w C_a}.  
$$
For convenience, we denote the $\operatorname{ord}_w C$ power of the matrix $P_{C}$ and its principal blocks $(B_{C}^{jj})$ by $P_{C}^{w}$ and $(B_{C}^{jj})^w$, respectively.} Then 
\begin{equation}\label{eqn:p_w_sequence}
     P_{\psi(\gamma)} = P^w_{C_a}P^w_{C_b}\cdots  P^w_{C_a}.
\end{equation}
Recall that $\pi^{C_a}$ and $\pi^{C_b}$ are the partition of the index sets induced by the cycles $C_a$ and $C_b$, respectively. If the partitions $\pi^{C_a}$ and $\pi^{C_b}$ are the same, the Proposition trivially holds and $\pi^{\psi(\gamma)}=\pi^{C_a}$ owing to  Lemma~\ref{lem:primitive_block} and~\eqref{eqn:p_w_sequence}.

We now consider the case whereby $\pi^{C_a}$ and $\pi^{C_b}$ are not equal to each other. We  treat the permutation and irreducible parts of $P_{\psi(\gamma)}$ separately.

{{\em The principal block matrix ${B}^{00}_{\psi(\gamma)}$:}} First consider the permutation block in each matrix $P^w_{C_a}$ and $P^w_{C_b}$. 
{Choose  $i \in \pi^{C_a}_{0}$ and $i \notin \pi^{C_b}_{0}$; from Lemma~\ref{lem:two_factor_perm}, the permutation index set for the matrix $P^w_{C_a}P^w_{C_b}$ does not contain the index $i$.} Owing to the above, the corresponding matrix $P_{\psi(\gamma)}$ only has permutation matrices corresponding to columns/rows indexed by the intersection of the permutation index sets of the  cycles visited by $\psi(\gamma)$, which is $\pi^{\psi(\gamma)}_0 =  \cap_{C \in \psi(\gamma)} \pi^C_0$. {It proves that the rows/columns of the principal matrix ${B}^{00}_{\psi(\gamma)}$ are indexed by $\cap_{C \in \psi(\gamma)} \pi^C_0$ (which is the second part of the Proposition). }

{
{\em The principal block matrix ${B}^{jj}_{\psi(\gamma)}$ for $j\geq1$:}
From Definition~\ref{def:non-trivial}, we have:  
$$
w P^w_{C_a} = w \quad \text{and} \quad w P^w_{C_b} = w.
$$  
By substituting the first equality into the second, we obtain:  
\begin{align}\label{eqn:product}
    w P^w_{C_a}P^w_{C_b} = w.
\end{align}
Now, from Lemma~\ref{lem:two_factor_perm}, we know that the only permutation matrix in the product $P^w_{C_a}P^w_{C_b}$ is the principal block indexed by $\pi^{C_a}_0 \cap \pi^{C_b}_0$. We denote by $w_{\phi}$ the entries of the weight vector $w$ that are indexed by the set $\phi \subseteq \pi$. Since $w$ is invariant under $P^w_{C_a}P^w_{C_b}$ (see \eqref{eqn:product}), the subvector $w_{\pi^{C_a}_0 \cap \pi^{C_b}_0}$ must be a fixed point of the permutation block in $P^w_{C_a}P^w_{C_b}$. Note that some entries of the subvector $w_{\{\pi^{C_a}_0 \cap \pi^{C_b}_0\}}$ might not be equal, and hence  the permutation block in $P^w_{C_a}P^w_{C_b}$ is not necessarily the identity matrix.}

{Now, we can discuss the sets $\pi^{C_a}_0 \setminus \pi^{C_b}_0$ and $\pi^{C_b}_0 \setminus \pi^{C_a}_0$.  From Definition~\ref{def:non-trivial} and Lemma~\ref{lem:irreducible_blocks}, we have:  
\begin{align}
w_{\pi^{C_a}_0} (B^{00}_{C_a})^w = w_{\pi^{C_a}_0}, \quad w_{\pi^{C_b}_0} (B^{00}_{C_b})^w = w_{\pi^{C_b}_0}.  
\end{align}
In words, the subvectors $w_{\pi^{C_a}_0}$ and $w_{\pi^{C_b}_0}$ are a fixed point of the permutation matrices $(B^{00}_{C_a})^w$ and  $(B^{00}_{C_b})^w$, respectively. However, from Lemma~\ref{lem:two_factor_perm}, we know that the {\em only} principal permutation block in the product $P^w_{C_a}P^w_{C_b}$ is indexed by $\pi^{C_a}_0 \cap \pi^{C_b}_0$. This implies that, outside of this intersection, the remaining principal permutation blocks in \( P^w_{C_a} \) and \( P^w_{C_b} \), indexed by \( \pi^{C_a}_0 \setminus \pi^{C_b}_0 \) and \( \pi^{C_b}_0 \setminus \pi^{C_a}_0 \), respectively, are identity matrices. Since these indices correspond to identity matrices, the nodes indexed by $\pi^{C_a}_0 \setminus \pi^{C_b}_0$ and $\pi^{C_b}_0 \setminus \pi^{C_a}_0$ have self-arcs in the graphs $\mathbb{G}_{P_{C_a}^{w}}$ and $\mathbb{G}_{P_{C_b}^{w}}$, respectively. From Lemma~\ref{lem:union_of_SCC}, the nodes in $\mathbb{G}_{P_{C_a}^{w}}$ and $\mathbb{G}_{P_{C_b}^{w}}$ indexed by $\pi^{C_a}_j$ and $\pi^{C_b}_j$ for $j \geq 1$, respectively, form strongly connected components and also have self-arcs.}

{Now, we are in a position to discuss the graph of the product of matrices ${P_{C_a}^{w}}$ and ${P_{C_b}^{w}}$, denoted by $\mathbb{G}_{P_{C_a}^{w}P_{C_b}^{w}}$. From Lemma~\ref{lem:composition}, the graph $\mathbb{G}_{P_{C_a}^{w}P_{C_b}^{w}}$ is the composition of $\mathbb{G}_{P_{C_a}^{w}}$ and $\mathbb{G}_{P_{C_b}^{w}}$. Owing to the discussion above, each node indexed by the set $\pi \setminus (\pi^{C_a}_0 \cap \pi^{C_b}_0)$ has a self-arc in both $\mathbb{G}_{P_{C_a}^{w}}$ and $\mathbb{G}_{P_{C_b}^{w}}$. This implies that, using Lemma~\ref{lem:self_arc}, the edge set of the subgraph of $\mathbb{G}_{P_{C_a}^{w}P_{C_b}^{w}}$, whose nodes are indexed by $\pi \setminus (\pi^{C_a}_0 \cap \pi^{C_b}_0)$, is a subset of the union of the edge sets of the subgraphs of $\mathbb{G}_{P_{C_a}^{w}}$ and $\mathbb{G}_{P_{C_b}^{w}}$, where the nodes are indexed by the same sets of indices. Furthermore, since $\pi^{C_b}$ and $\pi^{C_a}$ partition the {\em same } index set $ \{1, 2, \dots, nm\} $, every element in $ \pi^{C_a}_0 \setminus \pi^{C_b}_0 $ (resp. $ \pi^{C_b}_0 \setminus \pi^{C_a}_0 $) belongs to some $ \pi^{C_b}_j $ (resp. $ \pi^{C_a}_j $) for $ j \geq 1 $.}

{We can then conclude that a path exists from any node indexed by $\pi^{C_a}_i$ to a node indexed by $\pi^{C_b}_j$ for some $i, j \geq 1$ in the graph $\mathbb{G}_{P_{C_a}^{w}P_{C_b}^{w}}$ whenever $\pi^{C_a}_i \cap \pi^{C_b}_j \neq \emptyset$ (note that $i$ and $j$ start from $1$, not $0$). Since the selection of nodes is arbitrary, the nodes indexed by $\pi^{C_bC_a}_k = \pi^{C_a}_i \cup \pi^{C_b}_j$ for some $k, i, j \geq 1$ form a strongly connected component in $\mathbb{G}_{P_{C_a}^{w}P_{C_b}^{w}}$ whenever $\pi^{C_a}_i \cap \pi^{C_b}_j \neq \emptyset$. It is known that a matrix is irreducible if and only if its associated graph is strongly connected~\cite[Thm. 6.2.44]{horn2012matrix}. It follows that each block $B^{kk}_{\psi(\gamma)}$, indexed by $\pi^{\psi(\gamma)}_k$, is irreducible for $k \geq 1$. This completes the proof.}  \end{proof}

For this section, we continue with {Example~\ref{exmp:derivation}}. {Towards illustrating the result of Proposition~\ref{prop:block_irreducible_for_exhaustive}, we analyze the partition of the index set induced by an exhaustive walk using this Example.}

\begin{exmp}[Cont.]\label{exmp:intro_partition}
Assume that the number of states for each node $m$ is $3$ in the graph shown in Figure~\ref{fig:butterfly_graph}; then $x(t) \in \mathbb{R}^{21}$. Consider the cycles $C_1$ and $C_2$. Assume that we have the following partition of the index set: 
\begin{align*}
   \pi^{C_1}& := \{\underbrace{\{ 2,4,7,10,11,\cdots,20,21 \}}_{\pi^{C_1}_{0}},  \underbrace{\{1,3,5\}}_{\pi^{C_1}_1} , \underbrace{\{6,8,9\}}_{\pi^{C_1}_2}   \} \\
   \pi^{C_2} &:=  \{   \underbrace{\{ 4,5,6,7,8,10,11,16,17,\cdots,20,21\} }_{\pi^{C_2}_{0}},\\ & \underbrace{\{12,13\}}_{\pi^{C_2}_1},\underbrace{\{1,2\}}_{\pi^{C_2}_2},\underbrace{\{14,15\}}_{\pi^{C_2}_3}  \}.
\end{align*}
Consider the partition of the index set induced by $C_2C_1$. We observe that the sets $\pi^{C_1}_1$ and $\pi^{C_2}_2$ have a nonempty intersection. Hence, the set $\pi^{C_2C_1}_1:=\pi^{C_1}_1  \cup \pi^{C_2}_2$ is an element of the partition $\pi^{C_2C_1}$. On the other hand, the index sets $\pi^{C_1}_0$ and $\pi^{C_2}_0$ labels the maximal permutation block in the corresponding matrix. From Proposition~\ref{prop:block_irreducible_for_exhaustive}, the set $\pi^{C_2C_1}_0:=\pi^{C_1}_0  \cap \pi^{C_2}_0$ is an element of the partition of the index set $\pi^{C_2C_1}$. Then, we obtain the following:
\begin{align*}
  \pi^{C_2C_1}  &= \{  \underbrace{\{ 4,7,10,11,16,17,\cdots,20,21\}}_{\pi^{C_2C_1}_{0}} , \underbrace{\{1,2,3,5\}}_{\pi^{C_2C_1}_1} ,\\ & \underbrace{\{6,8,9\}}_{\pi^{C_2C_1}_2} , \underbrace{\{12,13\}}_{\pi^{C_2C_1}_3}, \underbrace{\{14,15\}}_{\pi^{C_2C_1}_4}  \} 
\end{align*} 
\end{exmp}

{
The key observations are as follows:

\begin{itemize}
\item For permutation blocks, the index set $\pi_0^{C_2C_1}$ is the intersection of $\pi^{C_2}_0$ and $\pi^{C_1}_0$. Since intersection is a commutative operation, the order in which the cycles appear does not affect the set $\pi_0^{C_2C_1}$.
\item For non-permutation blocks, the index set $\pi_k^{C_2C_1}$ is formed by taking the union of overlapping elements of the partitions induced by $C_1$ and $C_2$ (i.e. $\pi_k^{C_2C_1}=\pi_j^{C_2} \cup \pi_i^{C_1}$ if  $\pi_j^{C_2} \cap \pi_i^{C_1}\neq \emptyset$ for some $k,i,j\geq1$). Here, intersection acts as the selection condition, while union serves as the operation to construct the set $\pi_k^{C_2C_1}$. Since the operations of intersection and union are both commutative, the order of cycle appearance does not affect the resulting partition structure.
\end{itemize}}

This implies that the order of appearance of the cycles in the exhaustive walk does not affect the corresponding index sets. We then conclude that the block structures of the matrices $P_{\psi(\gamma_1)}$ and $P_{\psi(\gamma_2)}$ are the same up to relabeling. We can thus denote the elements of the corresponding partition by $\pi^G$ (e.g., $\pi^G = \pi^{\psi(\gamma)}$). On the other hand, the order in which cycles are traversed affects the order in which local stochastic matrices (which are not necessarily commutative in our work) are multiplied, and thus the block matrices are not necessarily equal.

\subsection{Proof of Theorem~\ref{thm:consensus}}

{The \textit{support of a matrix} $A = [a_{ij}]$, denoted by $supp(A)$, is the set of indices $ij$ such that $a_{ij} \neq 0$. We denote by $\min A$ the smallest non zero entry of $A$: $$\min A = \min_{ij \in supp(A)} a_{ij}.$$}
For a cycle $C$ in $G$ with non-zero order, we define,
\begin{equation}\label{eqn:epsilon_for_cycle}
    \epsilon_C := \min_{1 \leq j \leq m} ( \min B^{jj}_C) 
\end{equation}
where $B^{jj}_C$ is the irreducible block in the matrix $M_C$~\eqref{eqn:factorization_Pc}. We then set:
\begin{equation}\label{eqn:epsilon_for_graph}
    \epsilon := \min_{C \in G} \epsilon_C.
\end{equation}
The coefficient of ergodicity of a stochastic matrix $A\in \mathbb{R}^{n \times n} $ is~\cite{seneta2006non}
\begin{equation}\label{eqn:mu_A}
    \mu(A) :=  \frac{1}{2} \max_{i,j} \sum^{n}{|a_{ik}-a_{jk}|}.
\end{equation}
It is clear that $\mu(A) \leq 1$ for any stochastic matrix $A$. {On the space of $n \times m$ real matrices, we define the following semi-norm for a given $A \in \Rset^{n\times m}$,
$$
    \left\|A\right\|_S := \max_{1\leq j \leq m} \max_{ 1 \leq i_1,i_2 \leq n  } | a_{i_1 j} - a_{i_2 j} |. 
$$
It should be clear that the semi-norm of $A$ is zero if and only if all rows of $A$ are equal.} The following inequality holds for any two stochastic matrices $B$ and $C$~\cite{hajnal1958weak},
\begin{equation}\label{eqn:ergodicity}
 \left\|BC\right\|_S \leq \mu(B) \left\|C\right\|_S.
\end{equation}
For any scrambling matrix $A$, we have the following inequality{~\cite{Morse_etAl2008Dynamically}}: 
\begin{equation}\label{eqn:mu_A_scrumbling}
    \mu(A) \leq 1 - \min(A)   .
\end{equation}
We now need the following lemma: 
\begin{lem}\label{lem:scrambling}
The product of any set of $ l\geq \lfloor\frac{n}{2}\rfloor$ irreducible $n \times n$ stochastic matrices with positive diagonal entries is a scrambling matrix.
\end{lem}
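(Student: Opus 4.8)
The plan is to establish the sharper, quantitative fact that every row of the product has more than $n/2$ nonzero entries, from which the scrambling property follows by a pigeonhole argument. Write the product as $B := A_l A_{l-1}\cdots A_1$ with each $A_i$ an irreducible $n\times n$ stochastic matrix having positive diagonal; assume $n\ge 2$ (the case $n=1$ is vacuous). Since $B$ is nonnegative, two rows $i,j$ of $B$ share a column $k$ with $B_{ik}>0$ and $B_{jk}>0$ exactly when the supports of those rows intersect. So it suffices to show each row of $B$ is supported on at least $\lfloor n/2\rfloor+1$ coordinates: then for any $i,j$,
\[
\bigl|\operatorname{supp}(\text{row }i)\cap\operatorname{supp}(\text{row }j)\bigr| \;\ge\; 2\bigl(\lfloor n/2\rfloor+1\bigr)-n \;\ge\; 1
\]
because $2\lfloor n/2\rfloor\ge n-1$, and hence $B$ is scrambling.

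The core is the following claim, which I would prove by induction on $k$: any product $C_k C_{k-1}\cdots C_1$ of $k\ge1$ irreducible $n\times n$ stochastic matrices with positive diagonal has each of its rows supported on at least $\min(k+1,n)$ coordinates; applying it with $k=l\ge\lfloor n/2\rfloor$ gives $\min(l+1,n)\ge\lfloor n/2\rfloor+1$ (using $\lfloor n/2\rfloor+1\le n$ for $n\ge2$), which is what the previous paragraph needs. For $k=1$: if some row $i$ of the irreducible matrix $C_1$ had support $\{i\}$, then, $\{i\}$ being proper and nonempty, permuting rows and columns would display $C_1$ in the reducible block form \eqref{eqn:factorization}, a contradiction; so each row has at least $2=\min(2,n)$ nonzeros. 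For the inductive step, put $B':=C_k\cdots C_2$ (a product of $k-1$ such matrices, hence itself with positive diagonal) and fix a row $i$. Nonnegativity of the matrices gives
\[
\operatorname{supp}(\text{row }i\text{ of }B'C_1) \;=\; \bigcup_{j\in W}\operatorname{supp}(\text{row }j\text{ of }C_1),\qquad W:=\operatorname{supp}(\text{row }i\text{ of }B'),
\]
and the right-hand side contains $W$ because $C_1$ has positive diagonal and $i\in W$. If $W=\{1,\dots,n\}$ we are done; otherwise $|W|\ge\min(k,n)$ by the inductive hypothesis, and if the union equalled $W$ then every row of $C_1$ indexed by $W$ would vanish off the columns of $W$, making $C_1$ permutation-similar to a block-triangular matrix with proper nonempty square diagonal blocks, contradicting irreducibility. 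Hence the union has size at least $|W|+1\ge\min(k+1,n)$, which closes the induction.

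The same argument can be run on the digraph $\mathbb{G}_B$, which by Lemma~\ref{lem:composition} is the composition $\mathbb{G}_{A_l}\circ\cdots\circ\mathbb{G}_{A_1}$ of digraphs each strongly connected and carrying a self-loop at every vertex: tracing back through the $l$ layers the set of vertices from which a fixed vertex is reachable, this set strictly enlarges at each layer until it exhausts $V$, the strict enlargement being exactly the fact that a strongly connected digraph has no proper nonempty vertex set closed under in-neighbours. I expect the crux to be precisely this ``supports grow by at least one per multiplication'' step, resting on the observation that stagnation of a row's support forces one of the factors to be reducible; the remainder is the elementary counting that upgrades the per-row bound $\lfloor n/2\rfloor+1$ to a pairwise intersection of supports, and it is this counting --- rather than forcing full positivity of $B$ --- that allows the threshold on $l$ to be $\lfloor n/2\rfloor$ instead of $n-1$.
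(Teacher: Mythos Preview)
Your argument is correct. Note, however, that the paper does not actually give its own proof of Lemma~\ref{lem:scrambling}: it simply cites \cite[Lem.~5]{chen2022gossip}. So there is nothing in the paper to compare your approach to directly.

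On the substance: the induction that row supports grow by at least one with each additional factor (stagnation of a row's support on a proper set $W$ would force the next factor to have its $W$-indexed rows supported in $W$, contradicting irreducibility), combined with the pigeonhole bound $2(\lfloor n/2\rfloor+1)-n\ge1$, is a clean and self-contained proof. One tiny cosmetic point: in the inductive step the remark ``and $i\in W$'' is superfluous, since the containment $W\subseteq\bigcup_{j\in W}\operatorname{supp}(\text{row }j\text{ of }C_1)$ already follows from the positive diagonal of $C_1$ alone. Also, when the union coincides with a proper $W$ you obtain the block form $\begin{bmatrix}*&0\\ *&*\end{bmatrix}$ (rows in $W$ vanish off the $W$-columns); to match the paper's convention~\eqref{eqn:factorization} you would permute $W^{c}$ to the top, but of course this is immaterial for reducibility.
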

See~\cite[Lem.~5]{chen2022gossip} for a proof of Lemma~\ref{lem:scrambling}.
\begin{defn}[Spanning Sequence]\label{def:spanning_seqeunce}Let $G=(V,E)$ be a simple, undirected graph. A finite sequence of edges of $G$ is spanning if it covers a spanning tree of $G$. An infinite sequence of edges is spanning if it has infinitely many disjoint finite strings that are spanning.
\end{defn}
For our purpose, a random walk in a graph is uniform, i.e., outgoing edges have the same probability of being selected as the next edge.

\begin{lem}\label{lem:exh_walk} Let $G$ be a bridgeless, simple, connected graph. If the set of local stochastic matrices $\{A_{e}\in \mathbb{R}^{nm\times nm}, e \in E\}$ is $w$-holonomic for $G$, then {any random walk $\gamma$ in $D_G(w)$ is an infinite exhaustive walk with probability one} and the sequence of edges $\psi(\gamma)$ is a spanning sequence in $G$.

\end{lem}
\begin{proof} {From Definition~\ref{def:derived_graph_w}, for a cycle $C$, there exists a path from any node in the orbit set $\mathcal{O}_w^C$ to $w$ and from $w$ to any node in the orbit set $\mathcal{O}_w^C$. From Definition~\ref{def:non-trivial}, we know that $w \in \cap_{C \in \vec{\mathcal{C}}} \mathcal{O}_w^C$. This shows that the derived graph $D_w(G)$ is strongly connected; a simple application of the (second) Borel-Cantelli lemma~\cite{hajek2015random} shows that the  event of visiting a node in $D_w(G)$ will occur infinitely often with probability one. This completes the proof for the first claim.}

Now, for the second claim, the set $\mathcal{O}_w^C$ is non-empty for any cycle $C$ by Definition~\ref{def:non-trivial}. Hence, there exists an edge, say $e$, in $D_G(w)$ which has the matrix weight $P_C$. The exhaustive walk $\gamma$ visits all edges in $D_G(w)$. Without loss of generality, assume that $\gamma = \gamma_a \lor e \lor \gamma_b$ where $\gamma_a$ and $\gamma_b$ are walks in $D_G(w)$; we have that $\psi(\gamma) = \psi(\gamma_b) \lor C \lor \psi(\gamma_a)$. This shows that the edges in any cycle $C$ in $G$ are visited. Because  $G$ is bridgeless, every edge in $G$ is covered by at least one cycle. This implies that every edge in $G$ is visited by the sequence of edges $\psi(\gamma)$, which concludes the proof.  
\end{proof}

With the auxiliary results above, we are now in a position to prove Theorem~\ref{thm:consensus}. 
\begin{proof}[Proof of Theorem~\ref{thm:consensus}.] 
Let $w^\top \in \operatorname{int}\Delta^{nm-1}$ be a weight vector. Recall that the set of local stochastic matrices $\{A_{e}\in \mathbb{R}^{nm\times nm}, e \in E\}$ is assumed to be $w$-holonomic for $G$. Let $\gamma$ be an infinite exhaustive walk in $D_G(w)$. 

Proposition~\ref{prop:block_irreducible_for_exhaustive} shows that the block of $P_{\psi(\gamma)}$ indexed by $\pi^{\psi(\gamma)}_0:= \cap_{C \in \psi(\gamma)} \pi_0^C$ is a permutation matrix. Due to Lemma~\ref{lem:exh_walk}, all cycles in $G$ are covered by $\psi(\gamma)$, which implies that $\pi^{\psi(\gamma)}_0 = \cap_{C \in \vec{\mathcal{C}}} \pi_0^C = \pi^G_0$. Consequently, by relabeling the states so that the submatrix indexed by $\pi^G_0$ is in the upper-left corner of $P_{\psi(\gamma)}$, we have proven the first part of assertion~\eqref{itm:block} in Theorem~\ref{thm:consensus}. 

It remains to characterize the limit set $\mathcal{L}$. Let $\Gamma$ be the set of all finite exhaustive walks in $D_G(w)$. Let $\mathcal{S}_{\Gamma}$ be the set of permutation matrices $B^{00}_{\psi(\gamma_i)}$, for all $\gamma_i \in \Gamma$. Owing to the above paragraph, each $B^{00}_{\psi(\gamma_i)}$ is of the same dimension $|\pi^G_0|$. The set $\mathcal{S}_{\Gamma}\subseteq S_{|\pi^G_0|}$ is obviously finite; let $\mathcal{K}$ be the subgroup generated by the elements of $\mathcal{S}_{\Gamma}$. We can write an infinite exhaustive walk $\gamma$ as the concatenation of finite exhaustive closed walks $\gamma_i$, $i\geq 1$, in $D_G(w)$. We then have $\tilde{P}_{\psi(\gamma)}= \cdots  B^{00}_{\psi(\gamma_{i+1})} B^{00}_{\psi(\gamma_i)}B^{00}_{\psi(\gamma_{i-1})} \cdots$ which shows that $\tilde{P}_{\psi(\gamma)} \in \mathcal{K}$.  

We now show that, given a weight vector $w$, the block $M_{\psi(\gamma)}$ in Theorem~\ref{thm:consensus} is {\em uniquely} given. This implies that there exists an injection between the limit set $\mathcal{L}\ni P_{\psi(\gamma)}$ of the process and the set $\mathcal{K}$, which proves that the limit set is finite. It proves the assertion~\eqref{itm:limit_set} in Theorem~\ref{thm:consensus}. To proceed, recall that Proposition~\ref{prop:block_irreducible_for_exhaustive} states that the matrix $M_{\psi(\gamma)}$ consists of principal block matrices that are irreducible with dimensions $|\pi^{G}_i|$ for $i\geq 1$ and are denoted by $M_{\psi(\gamma)}^{ii}$. Let $l_G:= \max_{i}({|\pi^G_i|})$. Let $0:=t_0 < t_1 < t_2 \cdots $ be a monotonically increasing sequence such that every string $\gamma(t_{k} : t_{k+1}) $ for $k \geq 0$, has $\lfloor \frac{l_G}{2} \rfloor$ exhaustive walks in $D_G(w)$. Since $\|M_{\psi(\gamma)}\|_S$ is non-increasing by~\eqref{eqn:ergodicity}, it has a limit for $|\gamma| \to \infty$. We now show that $\|M_{\psi(\gamma)}^{ii}\|_S$ is $0$ for $i \geq 1$.

Lemma~\ref{lem:scrambling} implies that $M_{\psi( \gamma(t_{k} : t_{k+1}) )}^{ii}$  is a scrambling matrix. Plugging the lower bound~\eqref{eqn:epsilon_for_graph} into \eqref{eqn:mu_A_scrumbling}, we have the following inequality $\mu(M_{\psi( \gamma(t_{k} : t_{k+1}) )}^{ii}) \leq (1- \epsilon)$. Then, we can use this inequality in~\eqref{eqn:ergodicity} for each block matrix $M_{\psi(\gamma(t_k:t_{k+1}))}^{ii}$ to obtain:
$$
\begin{aligned}
 \lim_{k \to {\infty } } \left\|M_{\psi(\gamma(0:t_k))}^{ii}\right\|_S & \leq \lim_{k \to \infty}  (1-\epsilon)\left\|M_{\psi(\gamma(0:t_{k-1}))}^{ii}\right\|_S  \\
& \leq(1-\epsilon)^{k} = 0
\end{aligned}
$$
which implies that $\lim_{k \to {\infty } } \left\|M_{\psi( \gamma(0: t_{k}) )}^{ii}\right\|_S = 0$. We conclude using~\cite{seneta2006non} that $M_{\psi( \gamma(0 :t_k) )}^{ii}$ converges to a rank-one matrix, say 
$M_{\psi(\gamma)}^{ii} =  \mathds{1} p_i^\top$ for some vector $p_i \in \mathbb{R}^{|\pi^G_i|}$. This establishes asymptotic convergence in the assertion~\eqref{itm:rank_one} in Theorem~\ref{thm:consensus}. 

It remains to provide an explicit characterization of the vector $p_i^\top$ such that $M_{\psi(\gamma)}^{ii}=\mathds{1}p_i^\top$ as a function of the weight vector $w$. We denote by $\Tilde w$ the entries of the weight vector $w$ which are indexed by the set $\cup_{i \geq 1} {\pi^G_i}$. By construction of the derived graph $D_G(w)$, the walk $\gamma$ induces the mapping:
$$
\tilde{w} \mapsto \tilde{w} M_{\psi(\gamma)} ( =\tilde{w}). 
$$
Then, we have the following up to relabeling,
\begin{equation}\label{eqn:block_for_limit}
   \tilde{w} = \tilde{w} \left[ \begin{smallmatrix}
        M_{\psi(\gamma)}^{11}   &  &  \\
         &  M_{\psi(\gamma)}^{22} & \\
         & & \ddots & 
   \end{smallmatrix} \right] = \tilde{w} \left[ \begin{smallmatrix}
      \mathds{1} p_1^\top   &  &  \\
         &  \mathds{1} p_2^\top & \\
         & & \ddots & 
    \end{smallmatrix} \right].
\end{equation}
 From the block structure \eqref{eqn:block_for_limit}, we have that,
$$
p_i^\top := \frac{ [\Tilde{w}]_{j \in \pi^G_i}   }{\alpha_i} \mbox{ where } \alpha_i:=\sum_{j \in \pi^G_i} \Tilde{w}_j.
$$
Since $w$ is a weight vector by definition, we know that $\alpha_i \in (0,1]$. This then shows that $p_i$ has no zero entry.  
\end{proof}

\section{Summary and outlook}\label{sec:conclusion}

In this paper, we have investigated the weighted average consensus problem for a gossiping network of agents with vector states. We have introduced the concept of $w$-holonomy for a set of stochastic matrices, which helped us to investigate the existence of non-trivial, finite holonomy groups in the gossip process. The allowable sequences of updates in the gossip process were obtained as closed walks in the so-called derived graph $D_G(w)$, in that any infinite exhaustive walks in $D_G(w)$ could be mapped to an allowable sequence of updates for the gossip process. Such sequences could be implemented in a decentralized manner, and we have shown that the corresponding infinite product of stochastic matrices converges to a finite limit set, whose elements we have explicitly characterized.

Our results have established a unified framework that connects the methodologies presented in~\cite{chen2022gossip} and~\cite{belabbas2021triangulated}. Indeed, on the one hand, we have extended the framework of~\cite{chen2022gossip} by allowing gossip processes that display non-trivial holonomy groups, which results in a finite limit set for the process (as stated in Theorem~\ref{thm:consensus}). This is in contrast to~\cite[Thm.~1]{chen2022gossip}, where the limit set is a singleton. As a drawback of the existence of non-trivial holonomy groups, our results require following an allowable sequence of updates in the gossip process, whereas the order of the gossiping pairs does not matter in~\cite[Thm.~1]{chen2022gossip}. 

On the other hand, in~\cite{belabbas2021triangulated}, allowable sequences have been found through a derived graph,  (see~\cite[Definition 2.3]{belabbas2021triangulated}) nodes of which correspond to three nodes in the gossip graph that communicate simultaneously. This approach, motivated by the design of secure protocols, results in allowable sequences consisting of an infinite concatenation of triangles within the communication graph. In our work, we have developed a different perspective. Each node in our derived graph corresponds to an element of the orbit sets of the weight vector around a cycle. Consequently, our allowable sequences consist of the concatenation of cycles in the communication graph. It is worth noting that our approach requires a bridgeless communication graph, while the methodology presented by~\cite{belabbas2021triangulated} requires a triangulated Laman graph.

The present work can be extended in several directions. Among others, we focus here on algorithm design, which comprises two parts. The first is to characterize the set of local stochastic matrices that yield a predefined consensus weight for agents with vector-valued states. It is relatively straightforward to use the results developed in this paper to develop a method that yields the desired local stochastic matrices given that the communication graph is {\em bridgeless}. Removing this topological constraint requires further research. This leads us to the second aspect, which is to remove the requirement of bridgeless graphs. To understand what it entails, we first note that the requirement can be traced back to the definition of $w$-holonomy involving cycles in $G$. Thus, one approach to remove the requirement is to modify the definition of $w$-holonomy to consider paths rather than cycles in the communication graph.

Namely, for a path $\zeta$, we would redefine the orbit set in~\eqref{eqn:orbit_set} as $\mathcal{O}_w^{\zeta}:=\{ w_{\zeta}^{(a)} \in \mathbb{R}^{nm} | w_{\zeta}^{(a)} = w({P}_{\zeta})^a \mbox{ for }  a \in \mathbb{N} \}$ and state that if the set $\mathcal{O}_w^{\zeta}$ has finite and non-zero cardinality, then set of local stochastic matrices $\{ A_e \in \mathbb{R}^{nm\times nm},\forall e \in \zeta\}$ will be $w$-holonomic for $\zeta$. With this modification, we can still employ the derived graph approach to characterize the allowable sequences. However, this introduces a significant challenge: the allowable sequences of updates cannot be followed in a decentralized manner (at least in an obvious manner).
The ability of gossip processes to operate without a central authority is however crucial. This highlights the need to further understand the connection between topological constraints on $G$, and the development of decentralized update rules.

\bibliographystyle{ieeetr}
\bibliography{sample}

\end{document}